\newcommand{\tomemail}{\href{mailto:tom.bachmann@zoho.com}{tom.bachmann@zoho.com}}
\newtheorem{proposition}{Proposition}
\newtheorem{corollary}[proposition]{Corollary}
\newtheorem{lemma}[proposition]{Lemma}
\newtheorem{theorem}[proposition]{Theorem}
\newtheorem{conjecture}[proposition]{Conjecture}
\newtheorem*{conjecture*}{Conjecture}
\newtheorem*{theorem*}{Theorem}
\newtheorem*{corollary*}{Corollary}
\newtheorem*{proposition*}{Proposition}
\newtheorem*{lemma*}{Lemma}
\theoremstyle{definition}
\newtheorem{definition}[proposition]{Definition}
\newtheorem*{definition*}{Definition}
\newtheorem*{construction*}{Construction}
\theoremstyle{remark}
\newtheorem{remark}[proposition]{Remark}
\newtheorem*{remark*}{Remark}
\newtheorem{example}[proposition]{Example}
\newtheorem*{example*}{Example}
\newcommand{\id}{\operatorname{id}}
\newcommand{\Z}{\mathbb{Z}}
\newcommand{\N}{\mathbb{N}}
\let\scr=\mathcal
\let\bb=\mathbb
\newcommand{\Gm}{{\mathbb{G}_m}}
\newcommand{\Gmp}[1]{{\mathbb{G}_m^{\wedge #1}}}
\def\A{\bb A}
\newcommand{\1}{\mathbbm{1}}
\newcommand{\eff}{{\text{eff}}}
\newcommand{\veff}{{\text{veff}}}
\newcommand{\SH}{\mathcal{SH}}
\DeclareMathOperator*{\colim}{colim}
\let\lim=\relax
\DeclareMathOperator*{\lim}{lim}
\def\PSh{\mathcal{P}}
\def\Spc{\mathcal{S}\mathrm{pc}{}}
\def\Fin{\cat F\mathrm{in}}
\newcommand{\wequi}{\simeq}
\DeclareRobustCommand{\ul}{\underline}
\let\cat=\mathrm
\def\Sm{{\cat{S}\mathrm{m}}}
\def\FEt{\mathrm{FEt}{}}
\def\mot{\mathrm{mot}}
\newcommand{\fr}{\mathrm{fr}}
\def\ph{\mathord-}
\numberwithin{proposition}{section}
\newcommand{\NB}[1]{\todo[color=gray!40]{#1}}
\newcommand{\NB}[1]{}
\renewcommand{\todo}[1]{}
\newcommand{\SHS}{\mathcal{SH}^{S^1}\!}
\newcommand{\Hilb}{\mathrm{Hilb}}
\newcommand{\bir}{\mathrm{bir}}
\newcommand{\cof}{\mathrm{cof}}
\newcommand{\gp}{\mathrm{gp}}
\newcommand{\Corr}{\mathrm{Corr}}
\newcommand{\FSyn}{\mathcal{FS}\mathrm{yn}}
\newcommand{\flci}{\mathrm{flci}}
\newcommand{\GL}{\mathrm{GL}}
\newcommand{\cod}{\mathrm{cod}}
\newcommand{\KGL}{\mathrm{KGL}}
\newcommand{\Gr}{\mathrm{Gr}}
\title{Voevodsky's slice conjectures via Hilbert schemes}
\date{\today}
\begin{document}

\author{Tom Bachmann}
\email{\tomemail}
\address{Department of Mathematics, Massachusetts Institute of Technology,
Cambridge, MA, USA}

\author{Elden Elmanto}
\email{\href{mailto:elmanto@math.harvard.edu}{elmanto@math.harvard.edu}}
\address{Department of Mathematics, Harvard University, Cambridge, USA}

\keywords{algebraic $K$-theory, motivic cohomology, motivic spectral sequence, framed correspondences, Hilbert schemes}

\begin{abstract} We offer short and conceptual reproofs of some conjectures of Voevodsky's on the slice filtration. The original proofs were due to Marc Levine using  the homotopy coniveau tower. Our new proofs use very different methods, namely, recent development in motivic infinite loop space theory together with the birational geometry of Hilbert schemes.
\end{abstract}

\maketitle

\tableofcontents

\section{Introduction}

One major application of motivic homotopy theory is Voevodsky's construction of the Atiyah-Hirzebruch spectral sequence from motivic cohomology (which coincides Bloch's higher Chow groups \cite{bloch} of algebraic cycles up to reindexing \cite{voevodsky-compare}) converging to algebraic $K$-theory. While other constructions of this spectral sequence were proposed prior to motivic homotopy theory (notably \cite{friedlander-suslin,levine-techniques}), Voevodsky's approach is arguably the cleanest and most definitive --- we refer to \cite[\S 2]{levine-appreciate} for a survey.  This spectral sequence is obtained via the \emph{slice filtration} constructed in \cite{voevodsky-slice-filtration}, \cite{voevodsky293possible}, which is a functorial filtration that one associates to a motivic spectrum $E$:
\[
\cdots \rightarrow   f_qE \rightarrow f_{q-1}E \rightarrow \cdots f_0E \rightarrow \cdots E.
\]
The associated graded spectra are denoted
\[
s_qE := \mathrm{cofib}(f_{q+1}E \rightarrow f_qE),
\]
and are called the \emph{$q$-th slice} of $E$. Letting $E = \KGL$, the motivic spectrum representing algebraic $K$-theory, one obtains the desired spectral sequence.

While the construction of this filtration is formal, the identification of the spectral sequence (in other words, the associated graded) hinged on the next two conjectures.
They were stated by Voevodsky \cite{voevodsky293possible} and proved by Levine \cite{levine2008homotopy}.

\begin{conjecture} \label{conj:1} \cite[Theorem 10.5.1]{levine2008homotopy}, \cite[Conjecture 2]{voevodsky293possible} Let $k$ be a perfect field and $\1_k$ denote the motivic sphere spectrum. Then $s_0\1_k$ canonically identifies with the spectrum representing motivic cohomology.
\end{conjecture}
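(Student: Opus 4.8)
The plan is to compare $\1_k$ with the motivic cohomology spectrum $H\Z$ along the unit map $u\colon \1_k \to H\Z$ and to show that $u$ becomes an equivalence after applying $s_0$. Since $\1_k$ is effective, $f_0\1_k \simeq \1_k$ and hence $s_0\1_k \simeq \mathrm{cofib}(f_1\1_k \to \1_k)$; likewise $H\Z$ is effective and is $\otimes$-pure of weight zero, so $f_1 H\Z \simeq 0$ and $s_0 H\Z \simeq H\Z$ (this last point follows from any of the standard models of $H\Z$ together with the compatibility of the slice filtration with the theory of homotopy modules). Thus it suffices to show that $C := \mathrm{fib}(u)$ lies in $\SH(k)^{\eff}(1)$; since $T \simeq \Gm \wedge S^1$ and $\SH(k)^{\eff}$ is a stable subcategory, $\SH(k)^{\eff}(1) = \Gm \wedge \SH(k)^{\eff}$, so concretely we must show that $C$ is ``$\Gm$-divisible''. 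Granting this, applying the exact functor $s_0$ to the cofiber sequence $C \to \1_k \to H\Z$ annihilates $C$ and produces a canonical equivalence $s_0\1_k \simeq H\Z$, which one then checks is one of $E_\infty$-ring spectra.

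To analyze $C$ I would pass to the framed picture. By the reconstruction theorem of motivic infinite loop space theory, $\1_k$ is the framed suspension spectrum of the point; its effective part has infinite $\Gm$-loop space the motivic localization of the group completion of the $E_\infty$-space of framed correspondences $\mathrm{Fr}(-)$ of $\mathrm{pt}$, and under this dictionary $H\Z$ corresponds to the $\Z$-linearized framed correspondences $\Z\mathrm{Fr}(-)$, with $u$ realized by the linearization map. Now the birational geometry of Hilbert schemes enters: a level-$n$ framed correspondence from a point to a point is a finite lci closed subscheme $Z \subseteq \A^n$ together with a trivialization of its conormal sheaf, so the relevant moduli is a colimit over $n$ and over the length $d$ of ``framed Hilbert schemes'' $\Hilb^{\fr}_d(\A^n)$ fibered over the punctual Hilbert schemes $\Hilb_d(\A^n)$, and the linearization map is, on connected components, the length map $\coprod_d \mathrm{pt} \to \Z$. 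Membership of $C$ in $\SH(k)^{\eff}(1)$ thus becomes the statement that the fibre of this length map --- the motivic spectrum controlling the degree-zero framed configurations --- is built out of $\Gm$-suspensions.

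This last statement is the main obstacle, and the place where the genuine work lies: it asserts that the passage from framed $0$-dimensional correspondences to their underlying $0$-cycles is an equivalence modulo $\SH(k)^{\eff}(1)$. Its shadow on $\underline{\pi}_0$ is Morel's identification $\underline{\pi}_0(\1_k) = \underline{\mathrm{GW}}$ together with the fact that the kernel of the rank map $\underline{\mathrm{GW}} \to \underline{\Z}$ is the fundamental ideal sheaf, generated by the image of multiplication by units, i.e.\ by $\Gm$; the task is to promote this from $\underline{\pi}_0$ to all of $C$. For this I would exploit the birational geometry of the Hilbert scheme of points --- its rationality, with the main component birational to $\mathrm{Sym}^d(\A^n)$ and the remaining loci (the exceptional set over $\mathrm{Sym}^d(\A^n)$ and the non-reduced strata) of positive codimension, together with the scaling torus action --- to present the degree-zero part stratum by stratum so that each stratum contributes a cell that is nontrivially $\Gm$-suspended; passing to the colimit over $n$ along the stabilization maps, which carry the framing data compatibly with the $\Gm$-suspensions, and then over $d$, places $C$ in $\SH(k)^{\eff}(1)$. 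Once this geometric input is secured, the identification of $H\Z$ with the linear framed spectrum and the verification that the resulting equivalence respects the $E_\infty$-structures are formal, and the conjecture follows.
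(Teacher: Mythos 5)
Your high-level strategy aligns well with the paper's alternative proof (Example~\ref{ex:HZ}): show that the fiber of the linearization/degree map lands in $\SH(k)^\eff(1)$ by exploiting rationality of the framed Hilbert schemes $\Hilb^\fr_d(\A^n,*)$. The key geometric input you cite --- that $\Hilb_d(\A^n)$ is birational to $\mathrm{Sym}^d(\A^n)$ and hence rational --- is exactly Lemma~\ref{lemm:Hilb-rational}. But there are three genuine gaps in how you propose to turn this geometry into the statement $C \in \SH(k)^\eff(1)$.

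First, the ``stratum by stratum'' argument in your final paragraph does not work as described. Knowing the Hilbert scheme is rational, with the bad locus of positive codimension, does not directly ``present the degree-zero part'' as a complex of $\Gm$-suspended cells; there is no such cell structure available, and the torus action you invoke plays no role in the paper's proof. What the paper does instead is develop a theory of \emph{birational localizations} $L_\bir^d$ on $\Spc(k)$, $\Spc^\fr(k)$ and $\SHS(k)$ (Section~\ref{sec:birational}), and prove that on $\SHS(k)$ over a perfect field, $L_\bir^d$-equivalences are exactly the $s_{[0,d]}$-equivalences (Lemma~\ref{lemm:bir-basics}(5,6)). The nontrivial direction --- that the cofiber of a $d$-birational open immersion $U \hookrightarrow X$ lies in $\SHS(k)(d+1)$ --- is proved by homotopy purity and noetherian induction on the complement $X \setminus U$, not by a cell structure. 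You also need Lemma~\ref{lemm:Hilb-pres-bir} (Weil restriction and framed Hilbert schemes preserve $d$-birational open immersions) to descend the geometry to the framed level; this is the content of most of Section~\ref{sec:hilbert} and is not automatic.

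Second, your argument never mentions Conjecture~\ref{conj:2}, but it is used essentially. The birational localization formalism lives in $\SHS(k)$, and transferring a conclusion about $s_0$ from $\SHS(k)$ to $\SH(k)$ (equivalently, from $\omega^\infty C$ to $C$) requires knowing that $\omega^\infty$ intertwines the two slice towers --- this is Theorem~\ref{thm:levine}/Corollary~\ref{cor:final}, and it feeds into Proposition~\ref{prop:detect-eff}, which is the actual detection lemma that Example~\ref{ex:HZ} invokes. The paper's proof of Conjecture~\ref{conj:1} explicitly cites Conjecture~\ref{conj:2} for this reason. Without it, you cannot conclude $C \in \SH(k)^\eff(1)$ from the $S^1$-level birational information.

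Third, you take for granted that $f_1 H\Z \simeq 0$, asserting it ``follows from any of the standard models of $H\Z$.'' The paper flags precisely this point in Remark~\ref{rem:cyc}: proving that the spectrum representing higher Chow groups is effective (let alone that it is right-orthogonal to $\SH(k)^\eff(1)$) is genuinely nontrivial; Voevodsky needed a birational analysis of motivic Eilenberg--MacLane spaces and Levine needed his coniveau tower. The paper sidesteps this by identifying $s_0(\1)$ with $\Sigma^\infty_\fr \Z$ first (and proving \emph{that} is orthogonal to $\SH(k)^\eff(1)$ by a direct homotopy-sheaf computation), then passing through $s_0(\KGL)$ and Levine's computation to reach higher Chow groups. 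Your route via the framed model $\Z\mathrm{Fr}(-)$ for $H\Z$ is defensible, but you should be explicit that the identification $\Sigma^\infty_\fr \Z \simeq H\Z$ is a theorem (Hoyois), and that the orthogonality of $\Sigma^\infty_\fr\Z$ to $\SH(k)^\eff(1)$ needs its own argument (the paper uses $\ul{\pi}_*(\Sigma^\infty_\fr\Z)_{-1} = 0$ and unramifiedness).
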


\begin{conjecture} \label{conj:2} \cite[Theorem 9.0.3]{levine2008homotopy}, \cite[Conjecture 3]{voevodsky293possible} Let $k$ be a perfect field. The functor
\[
\omega^{\infty}:\SH(k) \rightarrow \SH^{S^1}(k),
\]
respects the slice filtration.
\end{conjecture}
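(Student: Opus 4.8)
The plan is to deduce the statement from the single assertion that $\omega^{\infty}$ carries $q$-effective motivic spectra to $q$-effective $S^1$-spectra, and to establish that assertion using the Hilbert-scheme model for free framed spectra.

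First, the reduction. Write $\SH(k)^{\eff}(q)=\Sigma_{\Gm}^{q}\SH(k)^{\eff}\subseteq\SH(k)$ for the subcategory of $q$-effective spectra, and let $\SHS(k)^{\eff}(q)\subseteq\SHS(k)$ be the localizing subcategory generated by the $\Gmp{q}\wedge\Sigma^{\infty}_{S^1,+}X$, $X\in\Sm_k$; let $f_q$ and $f_q^{S^1}$ be the right adjoints of the respective inclusions. Then ``$\omega^{\infty}$ respects the slice filtration'' means precisely that the canonical map $f_q^{S^1}\omega^{\infty}E\to\omega^{\infty}f_qE$ is an equivalence for every $E$ and $q$. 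Now $\Sigma^{\infty}_{\Gm}$ takes the generators of $\SHS(k)^{\eff}(q)$ to generators of $\SH(k)^{\eff}(q)$, so it preserves $q$-effectivity, and hence its right adjoint $\omega^{\infty}$ takes the right-orthogonal complement of $\SH(k)^{\eff}(q)$ into that of $\SHS(k)^{\eff}(q)$; in particular $f_q^{S^1}\omega^{\infty}(E/f_qE)\simeq 0$. Applying the exact functor $\omega^{\infty}$ to the fibre sequence $f_qE\to E\to E/f_qE$, the resulting fibre sequence $\omega^{\infty}f_qE\to\omega^{\infty}E\to\omega^{\infty}(E/f_qE)$ is therefore the slice truncation fibre sequence of $\omega^{\infty}E$ as soon as its fibre term $\omega^{\infty}f_qE$ is itself $q$-effective. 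So everything comes down to
\[
(\star)\qquad\omega^{\infty}\bigl(\SH(k)^{\eff}(q)\bigr)\subseteq\SHS(k)^{\eff}(q).
\]

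To prove $(\star)$ I would bring in the reconstruction theorem of motivic infinite loop space theory: over the perfect field $k$, combined with the cancellation theorem for framed motives, it identifies $\SH(k)^{\eff}$ with the category $\SH^{S^1,\fr}(k)$ of framed $S^1$-spectra, compatibly with $\Sigma_{\Gm}\simeq(-)\wedge\Gm$, and under this identification $\Sigma^{\infty}_{\Gm}\colon\SHS(k)\to\SH(k)^{\eff}$ becomes the free-framed-transfers functor $\gamma^{*}$. Consequently $\omega^{\infty}\simeq u\circ f_0$ with $u\colon\SH^{S^1,\fr}(k)\to\SHS(k)$ the (exact, colimit-preserving) forgetful functor; since $f_qE$ is already effective, $\omega^{\infty}f_qE\simeq u(f_qE)$, and $(\star)$ becomes the assertion that $u$ carries $\Gmp{q}\wedge\SH^{S^1,\fr}(k)$ into $\SHS(k)^{\eff}(q)$. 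As $\SH^{S^1,\fr}(k)$ is compactly generated by the free framed spectra $\gamma^{*}\Sigma^{\infty}_{S^1,+}X$, and $u$ and $-\wedge\Gm$ preserve colimits, it is enough to show that the single object $u\gamma^{*}\bigl(\Gmp{q}\wedge\Sigma^{\infty}_{S^1,+}X\bigr)$ lies in $\SHS(k)^{\eff}(q)$ for every $X\in\Sm_k$.

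This is where the birational geometry of Hilbert schemes enters, and it is the main obstacle. The free framed monad $u\gamma^{*}$ applied to $\Sigma^{\infty}_{S^1,+}X$ is, after group completion, the $S^1$-suspension spectrum of the framed motive of $X$, which is modelled by a filtered colimit of Hilbert schemes of finite subschemes of $X\times\A^{\infty}$; twisting the input by $\Gmp{q}$ yields the analogous colimit of Hilbert schemes carrying a rank-$q$ framing datum, and the slogan is that ``a rank-$q$ framing is worth a $\Gmp{q}$''. To turn this into a proof I would filter the ind-scheme by the length $d$ of the finite subscheme and stratify each length-$d$ stage by the incidence pattern of the points: on the open stratum of distinct points the framing datum splits off a smash factor $\Gmp{q}$ by an explicit deformation, while the deeper strata are handled inductively through resolutions of the (singular, punctual) Hilbert schemes --- e.g.\ nested/flag Hilbert schemes and deformation-to-the-normal-cone presentations --- glued by the abstract blow-up squares of the $\cdh$-topology. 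The $d=0$ contribution vanishes because $u\gamma^{*}$ is reduced, so the induction places every stage, and hence the whole colimit, in $\SHS(k)^{\eff}(q)$. Making the ``$\Gmp{q}$'' structurally visible \emph{uniformly over all incidence strata}, and not merely on the locus of distinct points, is the heart of the matter; everything before it is formal given the reconstruction theorem and the Hilbert-scheme model.
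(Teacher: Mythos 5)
Your reduction is correct and matches the paper's: everything comes down to the containment $(\star)$, namely $\omega^{\infty}(\SH(k)^{\eff}(q))\subseteq\SHS(k)(q)$, which is exactly the paper's Theorem~\ref{thm:levine}; and the deduction of the full statement from $(\star)$ via the fibre sequence $f_qE\to E\to E/f_qE$ (or, as the paper phrases it in Corollary~\ref{cor:final}, via $t$-exactness) is routine. You also correctly identify motivic infinite loop space theory and the Hilbert-scheme model as the essential input, and that one should check $(\star)$ on generators.

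The gap is in your plan for proving $(\star)$ on the generators. You propose to exhibit a ``$\Gmp{q}$'' smash factor inside $u\gamma^{*}(\Gmp{q}\wedge\Sigma^{\infty}_{S^1,+}X)$ directly, by stratifying the framed Hilbert scheme by length and then by incidence pattern of points, treating the distinct-point locus by an explicit deformation and the punctual strata by nested/flag Hilbert schemes and $\cdh$-descent. You yourself flag this as the heart of the matter and only offer a slogan; and indeed this is the hard way around. Punctual Hilbert schemes are singular and poorly understood, and the $\Gmp{q}$-factor is genuinely not ``structurally visible'' on those strata --- there is no reason to expect a uniform splitting, and the $\cdh$-blow-up bookkeeping over all lengths $d$ has no evident end. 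The paper sidesteps this entirely with a change of perspective: by Lemma~\ref{lemm:bir-basics}(5), the localizing subcategory $\SHS(k)(d+1)$ coincides with the one generated by cofibres of ($S^1$-suspensions of) $d$-birational open immersions, where ``$d$-birational'' just means the open subscheme contains every point of codimension $\le d$. This converts $(\star)$ into the statement that $\omega^{\infty}\Sigma^{\infty}_{+}$ sends $d$-birational open immersions to $L^{d}_{\bir}$-equivalences. Combined with the explicit colimit formula of Lemma~\ref{lemm:gc-technical} for $\omega^{\infty}\Sigma^{\infty}_{\fr}$, one only needs to know that the framed Hilbert scheme functor $X\mapsto\Hilb^{\fr}_{m}(\A^{n},X)$ preserves $d$-birational open immersions. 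That is Lemma~\ref{lemm:Hilb-pres-bir}, and it is proved not by stratification but by pure dimension counting: $d$-birationality is a fibrewise, fpqc-local condition on codimensions of points, stable under products and Weil restriction along finite flat maps (Corollary~\ref{corr:d-bir-bc}, Corollary~\ref{cor:d-bir-prod}, Proposition~\ref{prop:d-birational-weil}), and the framed Hilbert scheme is built exactly from such operations. No analysis of punctual loci is needed. So your outline has the right skeleton but is missing the $d$-birational reformulation, which is precisely what makes the Hilbert-scheme geometry tractable; without it the ``stratify and split off $\Gmp{q}$'' step is not a proof.
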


We will recall the definition of the slice filtrations on $\SH(k)$ and $\SH^{S^1}(k)$ in the main text. Voevodsky further proved that the validity of Conjecture~\ref{conj:2} ensures convergence of the resulting spectral sequence \cite[Corollary 3.4]{voevodsky293possible}, while Conjecture~\ref{conj:1} identifies the graded spectra as suspensions of the motivic cohomology spectrum \cite[Section 5]{voevodsky293possible} based on periodicity properties of the motivic spectrum representing algebraic $K$-theory. As already mentioned in the first paragraph, these motivic cohomology spectra have concrete incarnations as Bloch's higher Chow groups. In total, we obtain a strongly convergent, cohomologically-indexed spectral sequence:
\[
E^{p,q}_2  = H^{p-q}(X;\mathbb{Z}(-q)) = CH^{-q}(X, -p-q) \Rightarrow K_{-p-q}(X),
\]
whenever $X$ is a smooth scheme over a field. 

The purpose of this paper is to give an independent, short and conceptual proof of Conjecture~\ref{conj:2} and a simplification of Levine's proof of Conjecture~\ref{conj:1}, \emph{assuming motivic infinite loop space theory} \cite{EHKSY, EHKSY3, BEHKSY} as developed by the authors and Hoyois, Khan, Sosnilo and Yakerson, building on foundational work of Ananyevskiy, Druzhinin, Garkusha, Neshitov, and Panin in the seminal papers \cite{panin-framed-motive, garkusha2015homotopy, agp, gnp, DruzhininPanin}, based on unpublished ideas of Voevodsky's. In particular, these papers gave rise to motivic infinite loop space theory by computing the infinite $\mathbb{P}^1$-loop space of a variety in terms of framed correspondences. In fact a proof of Conjecture~\ref{conj:2} along these lines was already envisioned by Voevodksy in \cite{voevodsky293possible}, although our proof proceeds via rather different methods.

The proof of Conjecture~\ref{conj:2} is independent because we make no reference to Levine's proof. It is short, given the length of this paper. Finally, it is conceptual because we can reformulate both conjectures as relatively elementary statements about the birational geometry of certain Hilbert schemes. Indeed, motivic infinite loop space theory furnishes for us geometric models for the infinite loop space of the motivic sphere spectrum and, in fact, the suspension spectrum of any smooth $k$-variety as framed Hilbert schemes \cite[\S 5.1]{EHKSY}. That slices have something to do with the birational geometry of varieties is already well-known in the literature \cite{kahn2016birational, pelaez2014unstable}. In lieu of proving Conjecture \ref{conj:1} as stated, we identify $s_0(\1)$ with a certain explicit framed suspension spectrum. While this characterizes the spectrum uniquely, the relationship with higher Chow groups is not clear from this perspective. On the other hand, this simplifies the proof of Conjecture~\ref{conj:1} by replacing Levine's use of his ``reverse cycle map" with Hilbert schemes argument; see Remark~\ref{rem:cyc}.

\subsection*{Acknowledgements} We would like to acknowledge the influence of Marc Hoyois and Marc Levine on this paper and our education. 
We thank Maria Yakerson for comments on a draft.

\subsection*{Notation and conventions}
We fix a field $k$.
We make use of the categories and functors depicted in the following diagram.

\begin{equation*}
\begin{tikzcd}
\Spc(k)_* \ar[r, "\Sigma^\infty_{S^1}", bend left=10] \ar[rr, "\Sigma^\infty" swap, bend left=40] \ar[d, "F" swap, bend right=10] & \SHS(k) \ar[r, "\sigma^\infty", bend left=10] \ar[l, "\Omega^\infty_{S^1}", bend left=10] & \SH(k) \ar[l, "\omega^\infty", bend left=10] \ar[ll, "\Omega^\infty" swap, bend left=30] \ar[dll, "\Omega^\infty_\fr", bend left=50] \\
\Spc^\fr(k) \ar[u, "U" swap, bend right=10] \ar[urr, "\Sigma^\infty_\fr", bend right]
\end{tikzcd}
\end{equation*}

Here $\Spc(k)_*$ is the pointed unstable motivic $\infty$-category (see e.g. \cite[\S2.2]{bachmann-norms}), $\SHS(k)$ is the category of $S^1$-spectra, i.e. the stabilization of $\Spc(k)_*$, $\SH(k)$ is the category of motivic spectra (see e.g. \cite[\S4.1]{bachmann-norms}), and $\Spc^\fr(k)$ is the category of motivic spaces with framed transfers \cite[\S3.2]{EHKSY}.
All parallel functors in opposite directions are adjoints, the functors $\Sigma^\infty, \Sigma^\infty_{S^1}, \sigma^\infty$ are the evident infinite suspension functors, and $U$ is the evident forgetful functor.
The diagram of left adjoints (respectively right adjoints) commutes.

We freely use the language of $\infty$-categories as set out in \cite{lurie-htt,lurie-ha}.

\section{Some birational geometry of framed Hilbert schemes} \label{sec:hilbert}
For a scheme $X$ and a point $x \in X$, we write \[ \cod_X(x) = \dim(\scr O_{X,x}) \] for the codimension of $x$ in $X$ (see also \cite[Tag 02IZ]{stacks-project}).
We will use several times the following well-known ``codimension formula''.
\begin{theorem}
Let $f: X \to Y$ be a flat morphism of locally noetherian schemes and $x \in X$.
Then \[ \cod_X(x) = \cod_Y(f(x)) + \cod_{X_{f(x)}}(x). \]
\end{theorem}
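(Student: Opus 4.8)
The plan is to reduce the statement to a fact in local commutative algebra and then invoke the dimension formula for flat local homomorphisms. Put $y = f(x)$, $A = \scr O_{Y,y}$ and $B = \scr O_{X,x}$; since localization preserves flatness and noetherianness, $A \to B$ is a flat local homomorphism of noetherian local rings. By definition $\cod_X(x) = \dim B$ and $\cod_Y(y) = \dim A$, while the local ring of the scheme-theoretic fiber $X_y = X \times_Y \operatorname{Spec}\kappa(y)$ at $x$ is $B \otimes_A \kappa(y) = B/\mathfrak m_A B$; this is a noetherian local ring with maximal ideal $\mathfrak m_B/\mathfrak m_A B$ (using $\mathfrak m_A B \subseteq \mathfrak m_B$), so $\cod_{X_y}(x) = \dim(B/\mathfrak m_A B)$. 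Thus the theorem is equivalent to the equality $\dim B = \dim A + \dim(B/\mathfrak m_A B)$.

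I would prove the inequality $\dim B \le \dim A + \dim(B/\mathfrak m_A B)$ first; this part does not use flatness. Choose a system of parameters $a_1, \dots, a_d \in \mathfrak m_A$ for $A$, where $d = \dim A$, and elements $b_1, \dots, b_e \in \mathfrak m_B$, where $e = \dim(B/\mathfrak m_A B)$, whose images form a system of parameters for $B/\mathfrak m_A B$. One checks that $(a_1, \dots, a_d, b_1, \dots, b_e)B$ is $\mathfrak m_B$-primary: it contains $\mathfrak m_A B$ in its radical because $\sqrt{(a_1,\dots,a_d)A} = \mathfrak m_A$, and modulo $\mathfrak m_A B$ the $b_j$ already cut out $\mathfrak m_B/\mathfrak m_A B$ up to radical. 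The dimension theorem for noetherian local rings then gives $\dim B \le d + e$.

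The reverse inequality is where flatness enters, through the going-down property of flat ring maps \cite[Tag 00HS]{stacks-project}. Since $B/\mathfrak m_A B$ is local of dimension $e$, it admits a chain of primes of length $e$ ending at its maximal ideal; pulling this chain back to $B$ gives primes $\mathfrak q_0 \subsetneq \cdots \subsetneq \mathfrak q_e = \mathfrak m_B$ all containing $\mathfrak m_A B$, so $\mathfrak q_0$ contracts to $\mathfrak m_A$ in $A$. Choose also a chain $\mathfrak p_0 \subsetneq \cdots \subsetneq \mathfrak p_d = \mathfrak m_A$ of primes of $A$. As $\mathfrak q_0$ lies over $\mathfrak p_d$, repeated application of going-down produces primes $\mathfrak q_{-1}, \dots, \mathfrak q_{-d}$ of $B$ with $\mathfrak q_{-i} \subsetneq \mathfrak q_{-(i-1)}$ and $\mathfrak q_{-i}$ lying over $\mathfrak p_{d-i}$; concatenation yields a chain $\mathfrak q_{-d} \subsetneq \cdots \subsetneq \mathfrak q_0 \subsetneq \cdots \subsetneq \mathfrak q_e$ of length $d + e$ in $B$, so $\dim B \ge d + e$.

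I do not anticipate a genuine obstacle here: the only step requiring real care is the going-down splicing, and the identification of the fiber's local ring is routine. The displayed equality is classical --- it is recorded, with references, as \cite[Tag 00ON]{stacks-project} --- so in the final text one might simply cite it; the argument above is included only for self-containedness.
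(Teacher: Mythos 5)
Your reduction to the local statement $\dim B = \dim A + \dim(B/\mathfrak m_A B)$ is exactly what the paper does; the paper then simply cites this local dimension formula (to G\"ortz--Wedhorn, Corollary~14.95), whereas you unfold the standard proof (system of parameters for $\le$, going-down for flat maps for $\ge$). Your self-contained argument is correct, and as you note yourself one could equally well just cite the Stacks Project or the reference the paper uses.
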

\begin{proof}
Let $y=f(x)$.
Note that $\scr O_{X_y, x} = \scr O_{X,x} \otimes_{\scr O_{Y,y}} k(y)$.
The theorem is now a restatement of \cite[Corollary 14.95]{bagbook}.
\end{proof}

For the rest of this section, we assume that all schemes are locally noetherian.
\begin{definition}
Let $d \in \Z$.
An open immersion $U \hookrightarrow X$ is called \emph{$d$-birational} if whenever $x \in X$ with $\cod_X(x) \le d$, then $x \in U$.
\end{definition}
\begin{example}
If $d < 0$, the condition is vacuous.
If $d=0$, this coincides with what is usually called birational ($U$ contains all generic points of $X$).
\end{example}
\begin{remark} \label{rmk:bir-comp}
It follows from \cite[Tag 02I4]{stacks-project} that $d$-birational open immersions are stable under composition.
\end{remark}

The codimension formula tells us that being a $d$-birational open immersion is fpqc local (on the base).

\begin{corollary} \label{corr:d-bir-bc}
Let $\alpha: U \to S$ be arbitrary and $p: Y \to S$ flat.
\begin{enumerate}
\item If $\alpha$ is a $d$-birational open immersion, then so is the base change $\alpha_Y: U_Y \hookrightarrow Y$.
\item If $p$ is surjective and $\alpha_U$ is a $d$-birational open immersion, then so is $\alpha$.
\end{enumerate}
\end{corollary}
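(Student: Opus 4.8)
The plan is to deduce everything from the codimension formula together with the observation that, since $\alpha\colon U \hookrightarrow S$ is an open immersion, its base change $\alpha_Y\colon U_Y \hookrightarrow Y$ along $p$ is again an open immersion and $U_Y = p^{-1}(U)$ as a subset of $Y$. Thus in both parts we only have to compare, for a point, the condition of lying in the open subset with a bound on its codimension, and we may shuttle between such bounds on $S$ and on $Y$ using the formula $\cod_Y(y) = \cod_S(p(y)) + \cod_{Y_{p(y)}}(y)$, which is valid because $p$ is flat and all schemes in sight are locally noetherian.

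For part (1), suppose $\alpha$ is $d$-birational and let $y \in Y$ with $\cod_Y(y) \le d$; put $s = p(y)$. The codimension formula gives $\cod_S(s) \le \cod_Y(y) \le d$, so $s \in U$ by hypothesis, hence $y \in p^{-1}(U) = U_Y$. For part (2), suppose $p$ is surjective and $\alpha_Y$ is $d$-birational, and let $s \in S$ with $\cod_S(s) \le d$. Surjectivity of $p$ makes the fibre $Y_s$ nonempty, so we may choose $y \in Y_s$ to be a generic point of an irreducible component of $Y_s$; then $\cod_{Y_s}(y) = 0$, and the codimension formula yields $\cod_Y(y) = \cod_S(s) \le d$. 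Hence $y \in U_Y = p^{-1}(U)$, so $s = p(y) \in U$, as desired.

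I do not anticipate a real obstacle here: the one point that deserves a word is the choice in part (2) of a point of $Y_s$ with zero-dimensional local ring, which exists because any nonempty scheme has irreducible components, each carrying a unique generic point whose local ring has Krull dimension $0$. All the actual content is carried by the codimension formula; the corollary is just its repackaging as a base-change (indeed fpqc-local) statement, and the bookkeeping with $p^{-1}(U)$ is routine.
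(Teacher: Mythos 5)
Your argument is essentially the paper's: both parts boil down to applications of the codimension formula $\cod_Y(y) = \cod_S(p(y)) + \cod_{Y_{p(y)}}(y)$, and your computations are the same as the paper's. However, there is one step you skipped in part (2). You wrote $\alpha\colon U \hookrightarrow S$ and treated it as an open immersion throughout, but the corollary's hypothesis only says $\alpha$ is \emph{arbitrary}; in part (2) you are given that the base change $\alpha_Y$ (the paper's $\alpha_U$ is evidently a typo for $\alpha_Y$) is an open immersion, and you must first deduce that $\alpha$ itself is an open immersion before your codimension bookkeeping (which identifies $U$ with a subset of $S$ and $U_Y$ with $p^{-1}(U)$) makes sense. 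The paper handles this by invoking faithfully flat descent for open immersions (\cite[Tag 02L3]{stacks-project}), which uses that $p$ is flat and surjective. Once you add that one line, your proof matches the paper's.
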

\begin{proof}
(1) Since open immersions are stable under base change, it is enough to show that for $y \in Y$ with $\cod_Y(y) \le d$ we have $y \in U_Y$.
By the codimension formula we have $\cod_Y(y) \ge \cod_S(p(y))$, whence $p(y) \in U$ by $d$-birationality of $\alpha$.
Thus $y \in U_Y$, as needed.

(2) $\alpha$ is an open immersion by faithfully flat descent \cite[Tag 02L3]{stacks-project}.
Let $s \in S$ with $\cod_S(s) \le d$.
Let $y \in Y_s$ be a generic point, so that $\cod_{Y_s}(y) = 0$.
Then by the codimension formula we have $\cod_Y(y) = \cod_S(s) \le d$, so that $y \in U_Y$.
This implies $s \in U$, as needed.
\end{proof}

\begin{corollary} \label{cor:d-bir-prod}
If $U \hookrightarrow X$ and $V \hookrightarrow Y$ are $d$-birational open immersions of flat $S$-schemes, then so is $U \times_S V \hookrightarrow X \times_S Y$.
\end{corollary}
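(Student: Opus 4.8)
The plan is to factor the open immersion $U \times_S V \hookrightarrow X \times_S Y$ as a composite of two open immersions, each of which is obtained by base change from one of the given $d$-birational immersions along a \emph{flat} morphism, and then to conclude using Corollary~\ref{corr:d-bir-bc}(1) together with the stability of $d$-birational open immersions under composition (Remark~\ref{rmk:bir-comp}).

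The preliminary observation is a flatness bookkeeping: since $X \to S$ and $Y \to S$ are flat and open immersions are flat, the composites $U \to X \to S$ and $V \to Y \to S$ are flat as well; hence any projection off a fiber product of these schemes over $S$ is flat, being a base change of a flat map. Concretely, the projection $X \times_S Y \to X$ is flat (base change of $Y \to S$), and the projection $U \times_S Y \to Y$ is flat (base change of $U \to S$). With this in hand, I would factor
\[
U \times_S V \hookrightarrow U \times_S Y \hookrightarrow X \times_S Y.
\]
The second map is the base change of $U \hookrightarrow X$ along the flat map $X \times_S Y \to X$, hence is a $d$-birational open immersion by Corollary~\ref{corr:d-bir-bc}(1). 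The first map is the base change of $V \hookrightarrow Y$ along the flat map $U \times_S Y \to Y$, hence again $d$-birational by the same corollary. Composing via Remark~\ref{rmk:bir-comp} yields that $U \times_S V \hookrightarrow X \times_S Y$ is $d$-birational, as desired.

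This argument is essentially formal, and I do not anticipate a genuine obstacle: the only point requiring a moment's care is the canonical identification of $U \times_S Y$, regarded as an open subscheme of $X \times_S Y$ via the first projection, with the base change $(U \hookrightarrow X) \times_X (X \times_S Y)$, and similarly for the other factorization step, so that Corollary~\ref{corr:d-bir-bc}(1) applies verbatim. An alternative, equally short route would bypass the factorization and argue directly with the codimension formula applied to $X \times_S Y \to S$, checking that any point $z$ of codimension $\le d$ lies over a point of $S$ of codimension $\le d$ and restricts on the fiber to a point of $X_s \times_{k(s)} Y_s$ of codimension $\le d$; but the factorization approach reuses the machinery already set up and seems cleanest.
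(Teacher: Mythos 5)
Your proof is correct and uses exactly the same factorization $U \times_S V \hookrightarrow U \times_S Y \hookrightarrow X \times_S Y$ and the same appeal to Corollary~\ref{corr:d-bir-bc}(1) plus Remark~\ref{rmk:bir-comp} as the paper's proof. The additional flatness bookkeeping you spell out is implicit in the paper but correct.
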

\begin{proof}
We have the factorization \[ U \times_S V \hookrightarrow U \times_S Y \hookrightarrow X \times_S Y \] in which both maps are $d$-birational open immersions by Corollary \ref{corr:d-bir-bc}(1), and hence so is the composite by Remark \ref{rmk:bir-comp}.
\end{proof}

The next lemma furnishes a fiberwise criterion for being $d$-birational.

\begin{lemma} \label{lemm:d-birational-fiberwise}
Let $\alpha: U \hookrightarrow X$ be an open immersion of flat $S$-schemes.
Then $\alpha$ is $d$-birational if and only if for every $s \in S$, the restriction $\alpha_s: U_s \hookrightarrow X_s$ is $(d-\cod_S(s))$-birational.
\end{lemma}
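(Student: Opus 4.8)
The plan is to reduce the statement to a pointwise verification on fibers, exactly as in the proofs of Corollaries~\ref{corr:d-bir-bc} and~\ref{cor:d-bir-prod}, by systematically applying the codimension formula to the flat morphism $p : X \to S$. The key observation is that a point $x \in X$ lying over $s = p(x) \in S$ satisfies $\cod_X(x) = \cod_S(s) + \cod_{X_s}(x)$, so the two conditions ``$\cod_X(x) \le d$'' and ``$\cod_{X_s}(x) \le d - \cod_S(s)$'' are literally equivalent. The content is then just bookkeeping over which fiber $s$ to look at.

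For the forward direction, suppose $\alpha$ is $d$-birational, fix $s \in S$, and let $x \in X_s$ with $\cod_{X_s}(x) \le d - \cod_S(s)$. Since $X_s \to \operatorname{Spec} k(s)$ is obtained from $X \to S$ by base change and is therefore flat, and since $X_s$ embeds in $X$ over $s$, the codimension formula applied to $p$ gives $\cod_X(x) = \cod_S(s) + \cod_{X_s}(x) \le \cod_S(s) + (d - \cod_S(s)) = d$. Hence $x \in U$ by $d$-birationality of $\alpha$; since $x$ lies in $X_s$, this means $x \in U_s$. As $x$ was arbitrary, $\alpha_s$ is $(d - \cod_S(s))$-birational. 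Conversely, assume $\alpha_s$ is $(d-\cod_S(s))$-birational for every $s$, and let $x \in X$ with $\cod_X(x) \le d$. Put $s = p(x)$; then $\cod_S(s) \le \cod_X(x) \le d$ (again by the codimension formula, since $\cod_{X_s}(x) \ge 0$), so $d - \cod_S(s) \ge 0$ and the hypothesis on $\alpha_s$ is non-vacuous. The codimension formula rearranges to $\cod_{X_s}(x) = \cod_X(x) - \cod_S(s) \le d - \cod_S(s)$, so $x \in U_s \subseteq U$. Thus $\alpha$ is $d$-birational.

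There is essentially no hard part here: the only thing to be slightly careful about is that the codimension formula of the first theorem requires the \emph{source} scheme of the flat morphism to be locally noetherian, which holds by our blanket assumption for this section, and that the fiber $X_s$ is indeed a flat $k(s)$-scheme so that ``$d'$-birational'' makes sense for it. One could phrase the whole argument more slickly by noting that $x \in U \iff x \in U_{p(x)}$ (since $U_s = U \cap X_s$), so that $d$-birationality of $\alpha$ is the conjunction over all $s$ of the implications ``$\cod_X(x) \le d \Rightarrow x \in U_s$'' for $x \in X_s$, and then translating the hypothesis on each such $x$ through the codimension identity; but the direct two-direction argument above is already short enough that this repackaging is optional.
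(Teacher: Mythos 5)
Your proof is correct and follows exactly the same approach as the paper's: both directions are immediate applications of the codimension formula $\cod_X(x) = \cod_S(s) + \cod_{X_s}(x)$ for the flat morphism $X \to S$. The paper's version is essentially identical, minus your (optional) aside about non-vacuousness of the hypothesis when $\cod_S(s) \le d$.
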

\begin{proof}
Suppose $\alpha$ is $d$-birational.
Let $s \in S$ and $x \in X_s$ with $\cod_{X_s}(x) \le d-\cod_S(s)$.
By the codimension formula we deduce \[ \cod_X(x) = \cod_S(s) + \cod_{X_s}(x) \le d, \] and hence $x \in U \cap X_s = U_s$.

Conversely, suppose the fiberwise condition holds.
Let $x \in X$ with $f(x) = s$, and suppose that $\cod_X(x) \le d$.
Then by the codimension formula again we have \[ \cod_{X_s}(x) = \cod_X(x) - \cod_S(s) \le d - \cod_S(s). \]
It follows that $x \in U_s \subset U$.

This concludes the proof.
\end{proof}

For a (finite locally free) morphism $p: S' \to S$, we write $R_p$ for the Weil restriction functor \cite[Chapter 7]{bosch2012neron}.
\begin{proposition} \label{prop:d-birational-weil}
Let $p: S' \to S$ be finite locally free and $X \to S'$ smooth and quasi-projective.
Let $\alpha: U \hookrightarrow X$ be a $d$-birational open immersion.
Then $R_p(\alpha): R_p(U) \to R_p(X)$ is a $d$-birational open immersion.
\end{proposition}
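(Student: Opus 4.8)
The plan is to use the fibrewise criterion of Lemma~\ref{lemm:d-birational-fiberwise} to reduce to the case where $S$ is an algebraically closed field, and then to induct using the deformation theory of smooth morphisms. Throughout, all the reductions visibly preserve the standing hypothesis that $X\to S'$ is smooth and quasi-projective.

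First I would invoke the standard properties of Weil restriction along a finite locally free morphism (\cite[\S7.6]{bosch2012neron}): $R_p$ preserves open immersions and smoothness. Thus $R_p(\alpha)\colon R_p(U)\hookrightarrow R_p(X)$ is an open immersion of schemes smooth --- in particular flat and locally noetherian --- over $S$, so by Lemma~\ref{lemm:d-birational-fiberwise} it is enough to prove that for each $s\in S$ the fibre $R_p(\alpha)_s$ is $(d-\cod_S(s))$-birational. Weil restriction commutes with base change, so $R_p(\alpha)_s=R_{p_s}(\alpha_s)$, where $p_s\colon S'_s\to\operatorname{Spec} k(s)$ is the (finite locally free) fibre of $p$ over $s$ and $\alpha_s\colon U_s\hookrightarrow X_s$ is the base change of $\alpha$ along $S'_s\to S'$. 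This last base change is not flat, but $\alpha_s$ is nonetheless $(d-\cod_S(s))$-birational: the ``only if'' direction of Lemma~\ref{lemm:d-birational-fiberwise} applied to $\alpha$ over $S'$ shows $\alpha_y$ is $(d-\cod_{S'}(y))$-birational for each $y\in S'$ lying over $s$, the codimension formula gives $\cod_{S'}(y)=\cod_S(s)+\cod_{S'_s}(y)$, and feeding this into the ``if'' direction of Lemma~\ref{lemm:d-birational-fiberwise} for $\alpha_s$ over $S'_s$ gives the claim. So we may assume $S=\operatorname{Spec}\kappa$ for a field $\kappa$; and since $\operatorname{Spec}\bar\kappa\to\operatorname{Spec}\kappa$ is faithfully flat, Corollary~\ref{corr:d-bir-bc}(2) (with Corollary~\ref{corr:d-bir-bc}(1) to keep $\alpha$ $d$-birational after base change) lets us assume $\kappa$ algebraically closed.

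Now write $S'=\operatorname{Spec} B$ with $B$ finite over $\kappa$ and decompose $B=\prod_j B_j$ into local Artinian factors, necessarily with residue field $\kappa$. Weil restriction turns the resulting decomposition of $S'$ into a product over $\operatorname{Spec}\kappa$: $R_p(X)=\prod_j R_{p_j}(X_j)$ and likewise for $U$, where $p_j\colon\operatorname{Spec} B_j\to\operatorname{Spec}\kappa$. As each $\operatorname{Spec} B_j$ is clopen in $S'$, the restrictions $\alpha_j$ remain $d$-birational, so Corollary~\ref{cor:d-bir-prod} reduces us to a single factor: we may assume $B$ local Artinian with residue field $\kappa$, and I would then induct on the length of $B$. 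If $B=\kappa$ there is nothing to do. Otherwise choose a nonzero ideal $I\subseteq B$ with $I^2=0$, and set $B_0=B/I$, $X_0=X\times_B B_0$, $U_0=U\cap X_0$, $p_0\colon\operatorname{Spec} B_0\to\operatorname{Spec}\kappa$. The closed immersion $X_0\hookrightarrow X$ is a square-zero thickening, hence a homeomorphism, so $\alpha_0\colon U_0\hookrightarrow X_0$ is $d$-birational (it has the same underlying topology as $\alpha$), and by the inductive hypothesis $R_{p_0}(\alpha_0)$ is $d$-birational.

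It remains to compare $R_p(\alpha)$ with $R_{p_0}(\alpha_0)$. Restriction along $\operatorname{Spec} B_0\hookrightarrow\operatorname{Spec} B$ yields a commuting square with right-hand vertical arrow $\pi\colon R_p(X)\to R_{p_0}(X_0)$, and this square is cartesian: a $T$-point of $R_p(X)$ is a morphism $f\colon T\times_\kappa\operatorname{Spec} B\to X$, and $f$ factors through $U$ if and only if its restriction to $T\times_\kappa\operatorname{Spec} B_0$ factors through $U_0$, because $T\times_\kappa\operatorname{Spec} B_0\hookrightarrow T\times_\kappa\operatorname{Spec} B$ is a square-zero thickening and hence a homeomorphism, so the two morphisms have the same image in $X$. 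Consequently $R_p(\alpha)$ is the base change of $R_{p_0}(\alpha_0)$ along $\pi$, and Corollary~\ref{corr:d-bir-bc}(1) finishes the induction --- provided we know $\pi$ is flat. That last point is the crux: $\pi$ is in fact smooth and surjective, which one proves by applying the infinitesimal lifting property of the smooth morphism $X\to\operatorname{Spec} B$ to the square-zero extension $T\times_\kappa\operatorname{Spec} B_0\hookrightarrow T\times_\kappa\operatorname{Spec} B$; concretely, smoothness of $\pi$ is tested against affine square-zero extensions, where it becomes exactly the lifting criterion for $X\to\operatorname{Spec} B$. The only other subtlety is the non-flat base change producing $\alpha_s$ in the first reduction, which was dealt with above.
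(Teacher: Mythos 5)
Your proof is correct, and while the opening reductions (fibrewise criterion, base change to $\operatorname{Spec}\bar\kappa$, splitting $S'$ into connected components) are the same as the paper's, the endgame is genuinely different. The paper, once over an algebraically closed field with $S'$ finite local, compares $R_p(X)$ with $X$ itself: it exhibits a cartesian square identifying $p^*R_p(\alpha)$ as the base change of $\alpha$ along the counit $p^*R_pX\to X$, invokes the fact from Conrad--Gabber--Prasad that this counit is faithfully flat, pulls back to get that $p^*R_p(\alpha)$ is $d$-birational, and then descends along the faithfully flat $p^*R_pX\to R_pX$ (Corollary~\ref{corr:d-bir-bc}(2)). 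You instead induct on the length of $B$, compare $R_p(X)$ with $R_{p_0}(X_0)$ for a square-zero quotient $B_0=B/I$, exhibit $R_p(\alpha)$ as the base change of $R_{p_0}(\alpha_0)$ along the restriction map $\pi$, and show $\pi$ is smooth (hence flat) via the infinitesimal lifting criterion for $X\to\operatorname{Spec} B$. Both arguments hinge on the crucial closed immersion being a universal homeomorphism (for you: $\operatorname{Spec} B_0\hookrightarrow\operatorname{Spec} B$; for the paper: $T\hookrightarrow T\times_kS'$). Your route avoids the external citation for faithful flatness of the counit and uses only Corollary~\ref{corr:d-bir-bc}(1) in the final step, at the cost of setting up an induction and essentially re-deriving the smoothness of a restriction-of-Weil-restriction map; the paper's is shorter given the citation. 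One small point worth noting: the square-zero thickening needed for the lifting criterion of $\pi$ is not literally $T_0\times_\kappa B\hookrightarrow T\times_\kappa B$ but the pushout $(T_0\times_\kappa B)\sqcup_{T_0\times_\kappa B_0}(T\times_\kappa B_0)\hookrightarrow T\times_\kappa B$, whose ideal is $J\otimes_\kappa I$ (with $J$ the ideal of $T_0$ in $T$) and is indeed square-zero --- your phrase ``becomes exactly the lifting criterion'' glosses over this, but the argument goes through.
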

\begin{proof}
\todo{this is a mess...}
Open immersions and smooth schemes are preserved under Weil restriction \cite[Proposition 7.6.2(i), Proposition 7.6.5(h)]{bosch2012neron}.
In particular $R_p(X) \to S$ is flat.
Using Lemma \ref{lemm:d-birational-fiberwise} it is thus enough to show that for $s \in S$, the restriction $R_p(\alpha)_s$ is $(d-\cod_S(s))$-birational.
Let $s' \in S_s$.
Since $\dim S_s = 0$, the codimension formula implies that $\cod_S(s) = \cod_{S'}(s')$, and hence Lemma \ref{lemm:d-birational-fiberwise} implies that $\alpha_s$ is $(d - \cod_S(s))$-birational.
Since Weil restriction commutes with base change \cite[Proposition A.5.2(1)]{conrad2015pseudo}, we may thus assume that $S = Spec(k)$ is the spectrum of a field.
Applying Corollary \ref{corr:d-bir-bc}(2), we may assume that $k$ is algebraically closed.

Writing $S'$ as a finite disjoint union of its connected components and using Corollary \ref{cor:d-bir-prod}, we may assume that $S'$ is a finite local $k$-scheme, and so in particular $S' \to S$ is a universal homeomorphism \cite[Tags 00J8 and 01S4]{stacks-project}.
We claim that the canonically induced square
\begin{equation*}
\begin{CD}
p^* R_p U @>{p^*R_p(\alpha)}>> p^* R_p X \\
@VVV            @VVV     \\
U         @>{\alpha}>> X
\end{CD}
\end{equation*}
is cartesian.
Indeed for a scheme $T$ over $S'$, maps into $p^* R_p X$ (over $S'$) are the same as maps $T \times_k S' \to X$.
We thus need to show that a map $T \times_k S' \to X$ factors through $U$ if and only if the composite $T \to T \times_k S' \to X$ does\NB{details?}, which follows from the fact that the first map is a homeomorphism.

By \cite[Lemma A.5.11]{conrad2015pseudo}, the map $p^* R_p X \to X$ is faithfully flat, and hence $p^*R_p(\alpha)$ is $d$-birational by Corollary \ref{corr:d-bir-bc}(1).
Since $X \to S$ is faithfully flat so is $p^*R_p X \to S$, and hence $R_p(\alpha)$ is $d$-birational by Corollary \ref{corr:d-bir-bc}(2).
\end{proof}

With this preparation out of the way, we come to our main topic, Hilbert schemes.
For $X \in \Sm_k$ (quasi-projective, say) we write $\Hilb^{\fr}(\A^n, X)$ for the (ind-smooth) ind-scheme representing the functor $h^{\mathrm{nfr},n}(X)$ of \cite[\S5.1.4]{EHKSY}.
We have \[ \Hilb^{\fr}(\A^n, X) = \coprod_{d \ge 0} \Hilb^{\fr}_d(\A^n, X), \] the decomposition by degree; each $\Hilb^{\fr}_d(\A^n, X)$ is smooth.
Finally \[ \Hilb^\fr(\A^\infty, X) = \colim_n \Hilb^{\fr}(\A^n, X). \]

\begin{lemma} \label{lemm:Hilb-pres-bir}
If $\alpha: U \to X \in \Sm_k$ is a $d$-birational open immersion of smooth quasi-projective $k$-schemes, then so is $\Hilb^{\fr}_m(\A^n, U) \to \Hilb^{\fr}_m(\A^n, X)$.
\end{lemma}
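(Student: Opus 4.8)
The plan is to recognize the map $\Hilb^{\fr}_m(\A^n,U)\to\Hilb^{\fr}_m(\A^n,X)$ as a Weil restriction of the base change of $\alpha$ to the universal subscheme, and then to invoke Proposition~\ref{prop:d-birational-weil}.

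First I would set up notation. Let $H:=\Hilb^{\fr}_m(\A^n,\mathrm{Spec}\,k)$ be the framed Hilbert scheme carrying no target data (i.e.\ classifying degree-$m$ framed finite flat subschemes of $\A^n$); it is smooth over $k$ by the discussion above, in particular noetherian. Let $\mathcal Z\hookrightarrow\A^n_H$ be the universal such subscheme and $p\colon\mathcal Z\to H$ its structure morphism; since $H$ is noetherian and $\mathcal Z\to H$ is finite and flat, $p$ is finite locally free. Next I would unwind the definition of $h^{\mathrm{nfr},n}$ from \cite[\S5.1.4]{EHKSY}: a section over a scheme $S$ is the datum of a framed finite flat subscheme $Z\hookrightarrow\A^n_S$ (equivalently, a map $S\to H$, with $Z=\mathcal Z\times_H S$) together with a morphism $Z\to X$, and the framing data is independent of the target $X$. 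This exhibits $\Hilb^{\fr}_m(\A^n,X)$ as the Weil restriction $R_p(X\times_k\mathcal Z)$, where $X\times_k\mathcal Z$ is regarded as a $\mathcal Z$-scheme via the second projection, and likewise $\Hilb^{\fr}_m(\A^n,U)=R_p(U\times_k\mathcal Z)$; under these identifications the map in the statement becomes $R_p(\alpha_{\mathcal Z})$ for $\alpha_{\mathcal Z}:=\alpha\times_k\id_{\mathcal Z}\colon U\times_k\mathcal Z\hookrightarrow X\times_k\mathcal Z$.

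It then remains to check the hypotheses of Proposition~\ref{prop:d-birational-weil} for the finite locally free morphism $p$. The morphism $X\times_k\mathcal Z\to\mathcal Z$ is smooth and quasi-projective, being the base change of $X\to\mathrm{Spec}\,k$. And $\alpha_{\mathcal Z}$ is a $d$-birational open immersion: it is the base change of $\alpha\colon U\hookrightarrow X$ along the structure map $\mathcal Z\to\mathrm{Spec}\,k$, which is flat because it factors as $\mathcal Z\xrightarrow{p}H\to\mathrm{Spec}\,k$ with $p$ finite locally free and $H$ smooth over $k$; so Corollary~\ref{corr:d-bir-bc}(1) gives the $d$-birationality. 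Proposition~\ref{prop:d-birational-weil} then yields that $R_p(\alpha_{\mathcal Z})$ is a $d$-birational open immersion, which is exactly the claim. The only non-formal input is the identification of $\Hilb^{\fr}_m(\A^n,X)$ with $R_p(X\times_k\mathcal Z)$; I expect this to be immediate from the definition of the normally framed Hilbert functor in \cite{EHKSY}, but I would take care to make the dictionary precise. Everything else is a direct application of the preceding results — Proposition~\ref{prop:d-birational-weil} having been arranged with this lemma in mind — so I do not anticipate a genuine obstacle.
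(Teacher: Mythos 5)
Your proof is correct and takes essentially the same approach as the paper's: present $\Hilb^{\fr}_m(\A^n,X)$ as a Weil restriction of a constant family along a finite locally free morphism and apply Proposition~\ref{prop:d-birational-weil}. The paper instead uses the presentation $\Hilb^{\fr}_m(\A^n,X)\simeq R_p(I\times X)$ from \cite[\S5.1.4]{EHKSY}, with $p\colon Z\to\Hilb^{\flci}_m(\A^n)$ finite locally free and $I\to Z$ the $\GL_n$-torsor of framings (and hence also invokes Corollary~\ref{cor:d-bir-prod}), whereas you fold the framing into the base $H=\Hilb^{\fr}_m(\A^n,*)$ and restrict along the universal subscheme $\mathcal Z\to H$, using only Corollary~\ref{corr:d-bir-bc}(1) --- a harmless reorganization of the same argument.
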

\begin{proof}
There are maps $I \xrightarrow{q} Z \xrightarrow{p} \Hilb^\flci_m(\A^n)$ with $q$ smooth, $p$ finite locally free, such that $\Hilb^{\fr}_m(\A^n, X) \wequi R_p(I \times X)$ \cite[\S5.1.4]{EHKSY}.
The result thus follows from Corollary \ref{cor:d-bir-prod} and Proposition \ref{prop:d-birational-weil} (using that $\Hilb^\flci_m(\A^n)$ is smooth, and hence flat).
\end{proof}

Recall that a $k$-scheme $X$ is rational if there exists a span of $0$-birational open immersions $X \leftarrow U \to \A^n_k$ for some $n$.
\begin{lemma} \label{lemm:Hilb-rational}
$\Hilb^{\fr}_d(\A^n, *)$ is rational.
\end{lemma}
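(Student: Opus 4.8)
The plan is to exhibit an explicit dense open $U \subseteq \Hilb^{\fr}_d(\A^n, *)$ together with an open immersion $U \hookrightarrow \A^N_k$; since $\Hilb^{\fr}_d(\A^n, *)$ is smooth, once it is known to be connected --- hence integral --- the span $\Hilb^{\fr}_d(\A^n, *) \hookleftarrow U \hookrightarrow \A^N_k$ consists of $0$-birational open immersions, which is what it means to be rational.

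First I would establish connectedness, using the presentation $\Hilb^{\fr}_d(\A^n, *) \wequi R_p(I)$ from the proof of Lemma~\ref{lemm:Hilb-pres-bir}, in which $p\colon Z \to \Hilb^{\flci}_d(\A^n)$ is finite locally free of degree $d$ and $I \to Z$ is the $\GL_n$-torsor of framings. The morphism $R_p(I) \to \Hilb^{\flci}_d(\A^n)$ is smooth (hence open) and surjective, and, Weil restriction commuting with base change, its fibre over a point $b$ is a nonempty open subscheme of an affine space --- the Weil restriction of $\GL_n$ along the finite $\kappa(b)$-scheme $Z_b$ --- in particular connected. Since $\Hilb^{\flci}_d(\A^n)$ is smooth and connected \cite{EHKSY}, and an open surjection with connected fibres over a connected base has connected total space, $\Hilb^{\fr}_d(\A^n, *)$ is connected, hence integral.

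Next I would isolate the curvilinear locus $\Hilb^{\fr,\circ}_d(\A^n, *) \subseteq \Hilb^{\fr}_d(\A^n, *)$, where the composite of the universal subscheme $\mathcal{Z} \hookrightarrow \A^n$ with the first coordinate projection $\A^n \to \A^1$ is a closed immersion. Writing $t$ for the first coordinate restricted to $\mathcal{Z}$, this locus is the nonvanishing locus of the determinant of $\mathcal{O}^{\oplus d} \to p_*\mathcal{O}_{\mathcal{Z}}$, $e_i \mapsto t^{i-1}$, so it is open; it is nonempty (it contains $\operatorname{Spec} k[t]/(t^d) \hookrightarrow \A^1 \hookrightarrow \A^n$ with its canonical framing), hence dense by integrality. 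Unwinding \cite[\S5.1.4]{EHKSY}, a $T$-point of this locus consists of a monic $\chi \in \mathcal{O}_T[t]$ of degree $d$ (so $Z = \operatorname{Spec}\mathcal{O}_T[t]/(\chi) \hookrightarrow \A^1_T$), the images $\bar a_2, \dots, \bar a_n \in \mathcal{O}_T[t]/(\chi)$ of the remaining coordinates (so $Z \hookrightarrow \A^n_T$), and a framing; since the ideal of $Z$ is cut out by the regular sequence $\chi(t), x_2 - a_2(t), \dots, x_n - a_n(t)$ (with $a_i$ the lift of $\bar a_i$ of degree less than $d$), which canonically trivializes the conormal sheaf, the framing is recorded by a unique element of $\GL_n(\mathcal{O}_T[t]/(\chi))$. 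Hence $\Hilb^{\fr,\circ}_d(\A^n, *) \cong R_q(\GL_n)$, where $q\colon \mathcal{Z} = \operatorname{Spec}(\mathcal{O}_{\A^{nd}}[t]/(\chi)) \to \A^{nd}$ is finite locally free of degree $d$ with free structure sheaf (basis $1, t, \dots, t^{d-1}$), and $\A^{nd}$ is the affine space of coefficients of $\chi$ and of $\bar a_2, \dots, \bar a_n$.

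To conclude, since $q_*\mathcal{O}_{\mathcal{Z}}$ is free of rank $d$, the Weil restriction along $q$ of the $\mathcal{Z}$-scheme of $n \times n$ matrices is isomorphic to $\A^{nd + n^2 d}_k$, inside which $R_q(\GL_n)$ is the open subscheme where the universal matrix is invertible over $\mathcal{Z}$ (Weil restriction preserves open immersions, as in the proof of Proposition~\ref{prop:d-birational-weil}). Thus $\Hilb^{\fr,\circ}_d(\A^n, *)$ is an open subscheme of $\A^{nd + n^2 d}_k$ and is the desired $U$. I expect the main obstacle to be the connectedness of $\Hilb^{\flci}_d(\A^n)$ --- equivalently, the smoothability of finite local complete intersection schemes --- which is the only non-formal input; identifying the curvilinear locus with an open subscheme of affine space is the one step requiring a careful, though elementary, computation.
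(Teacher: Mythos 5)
Your proof is correct, but it takes a genuinely different route from the paper's. The paper argues by reduction: $I$ is a Zariski-locally trivial $\GL_n$-torsor over $Z$, hence birational to $\A^{n^2}\times Z$, and $R_p(\A^{n^2}\times Z)$ is locally over $\Hilb^\flci_d(\A^n)$ a product with an affine space; combined with the fact that Weil restriction preserves $d$-birational open immersions (Proposition~\ref{prop:d-birational-weil}), this reduces the claim to rationality of $\Hilb^\flci_d(\A^n)$, for which the paper cites Jelisiejew's birational identification with $\mathrm{Sym}^d(\A^n)$ and Mattuck's theorem that the latter is rational. You instead exhibit an explicit dense open --- the curvilinear locus $\Hilb^{\fr,\circ}_d(\A^n,*)$ --- and identify it as the Weil restriction $R_q(\GL_n)$ along a finite morphism $q\colon\mathcal Z\to\A^{nd}$ with \emph{free} pushforward, so that $R_q(\GL_n)$ is literally an open subscheme of $\A^{nd+n^2d}_k$. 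This is more self-contained: it bypasses the references to Jelisiejew and Mattuck, and in effect reproves the needed case of those facts on the nose (the curvilinear locus of $\Hilb^\flci_d(\A^n)$ is $\A^{nd}$). Both approaches share the same underlying non-formal input, which you isolate correctly: the connectedness of $\Hilb^\flci_d(\A^n)$ (i.e.\ smoothability of finite lci schemes), which in the paper is hidden inside the cited birationality to $\mathrm{Sym}^d(\A^n)$. Your supplementary fibration argument for connectedness of $\Hilb^\fr_d(\A^n,*)$ --- an open surjection with connected, nonempty fibers over a connected base, using that each $I_b$ is a trivial torsor over the semilocal $Z_b$ --- is a point the paper leaves implicit, and is worth recording.
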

\begin{proof}
We use the notation from the proof of Lemma \ref{lemm:Hilb-pres-bir}.
By construction, $I$ is a $\GL_n$-torsor over $Z$ (for the Zariski topology), and hence birational to $\GL_n \times Z$\NB{ref? proof: Let $\xi_1, \dots, \xi_n$ be the generic points. Let $U_i'$ be an open nbd of $\xi_i$ over which the torsor is trivial. Put $U_i = U_i' \setminus \cup_{j \ne i}\overline{\{\xi_j\}}$. Then the $\xi_i \in U_i$ and the $U_i$ are disjoint and open.}, whence birational to $\A^{n^2} \times Z$.
By construction of the Weil restriction \cite[Theorem 7.6.4]{bosch2012neron}, $R_{p}(\A^{n^2} \times Z)$ is locally on $\Hilb^\flci(\A^n)$ isomorphic to a product with an affine space.
Using Proposition \ref{prop:d-birational-weil}, it is thus enough to show that $\Hilb^\flci_d(\A^n)$ is rational.
It is well-known to be birational to $\mathrm{Sym}^d(\A^n)$ (see e.g. \cite[Lemma 4.28 and Theorem 4.36]{jelisiejew-thesis}), which is rational since $\A^n$ is \cite{mattuck1968field}.

This concludes the proof.
\end{proof}

\section{The birational localizations} \label{sec:birational}
Denote by $L_\bir^d \Spc(k)$ the (Bousfield) localization obtained by inverting $d$-birational open immersions of smooth $k$-schemes.
See \cite[\S5.4.4]{lurie-htt} for one account on localization of presentable $\infty$-categories.
Variants of these localizations have been considered previously, for example by Kahn-Sujata \cite{kahn2016birational} and Pelaez \cite{pelaez2014unstable}.

Since $f \times \id_X$ is a $d$-birational open immersion whenever $f$ is (see e.g. Corollary \ref{cor:d-bir-prod}), this is a symmetric monoidal localization (e.g. apply \cite[Proposition 6.16]{bachmann-norms} to $S$ the spectrum of an algebraically closed field and $\scr C = \FEt_S \wequi \Fin$\todo{surely there is a better reference?}).
By Zariski descent, the same localization is obtained by inverting $d$-birational open immersions between smooth quasi-projective (or affine) $k$-schemes.

Write $L_\bir^d \SHS(k)$ for the localization obtained by inverting maps of the form $\Sigma^{\infty+n}_{S^1} f_+$, with $n \in \Z$ and $f$ a $d$-birational open immersion.
Similarly write $L_\bir^d \Spc(k)_*$ for the localization at maps of the form $f_+$, and $L_\bir^d \Spc^\fr(k)$ for the localization at maps of the form $Ff$.
These are also symmetric monoidal localizations.

Recall that $\SHS(k)(d)$ is defined as the localizing subcategory generated by $\SHS(k) \wedge \Gmp{d}$.
The reflection into its right orthogonal is denoted by $s_{[0,d-1]}$.
\begin{lemma} \label{lemm:bir-basics}
Let $k$ be any field.
\begin{enumerate}
\item The functors \[ \Spc(k) \to \Spc(k)_* \to \SHS(k) \] and $F: \Spc(k)_* \to \Spc^\fr(k)$ preserve $L_\bir^d$-equivalences.
\item The forgetful functor $\Spc(k)_* \to \Spc(k)$ commutes with $L_\bir^d$.
\item $L_\bir^d$-equivalences in $\Spc(k)_*$ are stable under finite products.
\item The forgetful functor $U: \Spc^\fr(k) \to \Spc(k)_*$ commutes with $L_\bir^d$.
\end{enumerate}

If $k$ is perfect, then also the following hold.
\begin{enumerate}
\setcounter{enumi}{4}
\item A morphism $\alpha: E \to F \in \SHS(k)$ is an $L_\bir^d$-equivalence if and only if $\cof(\alpha) \in \SHS(k)(d+1)$.
  In fact the localizing subcategory generated by objects of the form $\cof(\alpha)$, where $\alpha$ is an $L_\bir^d$-equivalence, is $\SHS(k)(d+1)$.
\item For $E \in \SHS(k)$ we have $L_\bir^d E \wequi s_{[0,d]} E$.
\end{enumerate}
\end{lemma}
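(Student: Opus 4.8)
The plan is to prove the six claims of Lemma~\ref{lemm:bir-basics} in sequence, exploiting the fact that each $L_\bir^d$ is a (symmetric monoidal) Bousfield localization at an explicit set of maps, so that the assertions about functors commuting with $L_\bir^d$ or preserving equivalences are really assertions about where those generating maps go.

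For (1), the functors $\Spc(k) \to \Spc(k)_* \to \SHS(k)$ and $F$ are all left adjoints, hence preserve colimits; a left adjoint preserves local equivalences whenever it sends the chosen generating equivalences to equivalences, and by definition the $L_\bir^d$-equivalences in the targets are generated (under colimits and the two-out-of-three property, i.e.\ as a saturated class) by the images of $d$-birational open immersions $f$ under $(-)_+$, $\Sigma^\infty_{S^1}(-)_+$, respectively $F(-)$. So there is literally nothing to check beyond unwinding the definitions of the four localizations. For (2), the forgetful functor $\Spc(k)_* \to \Spc(k)$ is also a left adjoint (it has a right adjoint, the functor adding a disjoint basepoint composed with... — more precisely $\Spc(k)_* \to \Spc(k)$ preserves colimits since colimits in pointed objects are computed in the underlying category for connected diagrams, and one handles the initial object separately), and since the generating $L_\bir^d$-equivalences downstairs are exactly the $f$ with $f$ $d$-birational while upstairs they are the $f_+$, "commutes with $L_\bir^d$" here means: $L_\bir^d$ of a pointed space, viewed in $\Spc(k)$, agrees with $L_\bir^d$ of the underlying space. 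This I would deduce from the universal property: one checks that an underlying-$L_\bir^d$-local pointed space is pointed-$L_\bir^d$-local (clear, since $f$ local $\Leftrightarrow$ $f_+$ local by mapping into a pointed object), and that the unit $X \to L_\bir^d X$ is an underlying-equivalence (because the pointed localization can be built by the same small-object argument using the maps $f_+$, whose underlying maps are among the generating maps for the unpointed localization). Claim (3) is immediate from Corollary~\ref{cor:d-bir-prod}: the class of $L_\bir^d$-equivalences is the saturation of $\{f_+\}$, and saturated classes closed under the monoidal product of generators (which holds by the symmetric-monoidality noted just before the lemma, itself coming from Corollary~\ref{cor:d-bir-prod}) are closed under finite products of arbitrary equivalences by a standard argument. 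Claim (4) for $U$ is the analogue of (2): $U$ is a left adjoint (it has the right adjoint given by the "naive" cofree framed structure, or one invokes \cite[\S3.2]{EHKSY}), the generating $L_\bir^d$-equivalences in $\Spc^\fr(k)$ are the $Ff$, and $UFf \simeq f_+$ (compatibility of $F$ with the forgetful functors, from the diagram in the conventions section, since $UF$ is the identity — no wait, $UF \ne \id$; rather $U$ applied to the free framed space on $f_+$). The correct statement is: $U$ preserves colimits, $U(Ff) = $ the underlying space of the free framed object, and one shows an $L_\bir^d$-local framed space has $L_\bir^d$-local underlying space and conversely, using that $U$ detects/creates the relevant equivalences by the projection formula $\Map_{\Spc^\fr}(Ff, Y) = \Map_{\Spc(k)_*}(f_+, UY)$.

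The substance is in (5) and (6), where perfectness enters. For (5), the content is a comparison of two localizing subcategories of $\SHS(k)$: on one side $\SHS(k)(d+1)$, generated by $\Gmp{d+1}$-suspensions; on the other the localizing subcategory $\scr T$ generated by cofibers of $L_\bir^d$-equivalences. The inclusion $\scr T \subseteq \SHS(k)(d+1)$ is the easy direction: it suffices to show $\cof(\Sigma^{\infty+n}_{S^1} f_+) \in \SHS(k)(d+1)$ for $f: U \hookrightarrow X$ a $d$-birational open immersion. This is the crucial geometric input and is where I expect the main obstacle to lie. The idea must be to show that the cofiber of $U_+ \to X_+$, suitably desuspended, is "built from $d+1$ copies of $\Gm$", i.e.\ lies in the $(d+1)$-st step of the slice-type filtration. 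The classical way (Levine, and also how Voevodsky phrased it) is via a Gysin/purity argument: stratify $X \setminus U$ by closed subschemes of codimension $\ge d+1$, so that the cofiber is an extension of Thom spaces of normal bundles of codimension $\ge d+1$ smooth centers; each such Thom space is $\Gmp{d+1}$-effective because a Thom space of a rank-$r$ bundle is $\Gmp{r}$-effective (by the Morel–Voevodsky / bundle argument, $Th(E) \simeq \Sigma^r_{\Gm}(\text{something})$ after stabilization in $S^1$). But purity requires smoothness of the strata, which fails in general — this is exactly why one passes to perfect fields (to get generic smoothness / de Jong-type alterations, or rather to use Morel's connectivity and the Gabber presentation lemma). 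Concretely I would invoke the known identification of $\SHS(k)(d+1)$ (for $k$ perfect) with the localizing subcategory generated by $\Sigma^{\infty}_{S^1} Th(\xi)$ for $\xi$ a rank-$(\ge d+1)$ bundle on a smooth scheme, together with a "coniveau"-style devissage of $\cof(U_+ \to X_+)$ using that $X \setminus U$ has codimension $\ge d+1$ — this is the place where Levine's homotopy coniveau tower did the work, and honestly the cleanest route here may be to cite the relevant structural result rather than reprove it. For the reverse inclusion $\SHS(k)(d+1) \subseteq \scr T$, it suffices to realize each generator $E \wedge \Gmp{d+1}$ as built from cofibers of $L_\bir^d$-equivalences; since $\scr T$ is localizing it is enough to do this for $E = \Sigma^{\infty}_{S^1} X_+$ with $X$ smooth, i.e.\ to show $\Sigma^\infty_{S^1} X_+ \wedge \Gmp{d+1} \in \scr T$, and here $\Gm = \cof(\1 \to \Sigma^\infty_{S^1}(\A^1 \setminus 0)_+)$ wait rather $\Gm^{\wedge 1} \simeq \widetilde{\Sigma^\infty}_{S^1}\Gm$, and the point is that $\Gm^{\wedge d+1}$ is itself (a retract of) a cofiber of a map built from the $0$-birational — no, $d$-birational — open immersion... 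Actually the clean statement: $\A^{d+1} \setminus 0 \hookrightarrow \A^{d+1}$ is a $d$-birational open immersion (the complement is the origin, codimension $d+1$), and $\cof(\Sigma^\infty_{S^1}(\A^{d+1}\setminus 0)_+ \to \Sigma^\infty_{S^1}(\A^{d+1})_+) \simeq \Sigma^{2(d+1),d+1}\1 \wedge(\ldots)$, i.e.\ is a shift of $\Gmp{d+1}$; smashing with $X_+$ and using Corollary~\ref{cor:d-bir-prod} to see the relevant map is still a $d$-birational open immersion, one gets $\Sigma^\infty_{S^1}X_+ \wedge \Gmp{d+1}$ up to $S^1$-suspension inside $\scr T$. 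That handles the generators, hence $\SHS(k)(d+1) \subseteq \scr T$, giving (5).

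Finally (6) is a formal consequence of (5). By definition $L_\bir^d \SHS(k)$ is the localization at the $L_\bir^d$-equivalences $\alpha$, and its local objects are those $E$ with $\Map(\cof(\alpha), E) \simeq 0$ for all such $\alpha$, equivalently — by the "in fact" clause of (5) — those $E$ right-orthogonal to the localizing subcategory $\SHS(k)(d+1)$. But right-orthogonality to $\SHS(k)(d+1)$ is exactly the defining property of the $s_{[0,d]}$-local objects (the reflection onto the right orthogonal of $\SHS(k)(d+1)$ is $s_{[0,d]}$ by the definition recalled just before the lemma). Since two accessible localizations of a presentable stable $\infty$-category with the same local objects coincide, $L_\bir^d E \simeq s_{[0,d]} E$ naturally in $E$. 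The only subtlety is matching the essential images / that $L_\bir^d$ is itself accessible, which follows since it is a localization at a set of maps; and that the equivalence is natural, which is automatic for reflective localizations once the full subcategories of local objects agree. I would end by noting that perfectness of $k$ was used only through (5), via whatever structural input on $\SHS(k)(d+1)$ and the coniveau devissage we end up citing.
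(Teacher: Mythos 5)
Your high-level strategy for (1), (2), (5), (6) matches the paper's, but there are two concrete gaps, the second of which is the more serious.

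For (3), your appeal to ``symmetric monoidality noted just before the lemma'' does not give the stated conclusion: the monoidal structure on $\Spc(k)_*$ is the smash product, and closure of the saturated class under smashing with $X_+$ gives closure under smash products, not under \emph{cartesian} products of pointed spaces (which behave differently because of the extra wedge summands). The correct route, and the one the paper takes, is to deduce (3) from (2): the forgetful functor $u:\Spc(k)_* \to \Spc(k)$ preserves finite products, is conservative, and by (2) preserves and reflects $L_\bir^d$-equivalences; then one uses that $L_\bir^d$-equivalences in the \emph{unpointed} category are stable under products, since there the cartesian product is the symmetric monoidal product.

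For (4), your argument is missing the essential content. You propose to show that $U$ detects local objects via the adjunction $\Map_{\Spc^\fr}(Ff,Y) \simeq \Map_{\Spc(k)_*}(f_+,UY)$, and this is fine, but it is not enough: to prove that $U$ \emph{commutes} with $L_\bir^d$ one must also show that $U$ sends $L_\bir^d$-equivalences in $\Spc^\fr(k)$ to $L_\bir^d$-equivalences in $\Spc(k)_*$, i.e.\ that $U$ of the localization unit is still a local equivalence. The generating $L_\bir^d$-equivalences in $\Spc^\fr(k)$ are the $Ff$, and $UFf$ is the underlying map of the \emph{free} framed space on $f$ --- this is not $f_+$, and there is no formal reason it should be a $d$-birational local equivalence. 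This is precisely where the paper invokes its geometric input: one identifies the free framed functor on representables with framed Hilbert schemes (via \cite[Corollary 2.3.25 and Theorem 5.1.8]{EHKSY}), reducing the claim to showing $\Hilb^\fr(\A^\infty,X) \to \Hilb^\fr(\A^\infty,Y)$ is an $L_\bir^d$-equivalence, which is exactly Lemma~\ref{lemm:Hilb-pres-bir}. Without this step --- which is one of the central points of the paper --- (4) is unproved, and since (4) feeds into the main theorems, the gap propagates.

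A smaller point on (5): you initially mislabel which inclusion is ``easy'' (you write that $\scr T \subseteq \SHS(k)(d+1)$ is the easy direction and then immediately call it the crucial geometric input), though your eventual description of both inclusions is in substance the same as the paper's. You also suggest citing a structural result for the devissage of $X/U$, whereas the paper gives the short self-contained argument (induction on $\dim(X\setminus U)$ using generic smoothness over a perfect field and homotopy purity), which is where perfectness actually enters; it is worth knowing this is a two-line induction rather than an external citation.
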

In the proof, we shall make use of the theory of localizations of presentable $\infty$-categories and strongly saturated classes of morphisms \cite[\S5.5.4, Definition 5.5.4.5]{lurie-htt}.
\begin{proof}
(1) is clear by construction, (3) is immediate from (2), and (6) from (5).\NB{Details?}

(2) By construction the functor detects $L_\bir^d$-local objects.
This implies that it is enough to show that it preserves the strongly saturated class of morphisms (in $\Spc(k)_*$) generated by $d$-birational open immersions of smooth schemes.
By \cite[Lemma 2.10]{bachmann-norms}\NB{Applied to $\Spc(k)_* \to \Spc(k)$!}, for this it is enough to show that if $f$ is such a map and $X \in \Sm_k$, then $f \coprod \id_X \coprod \id_*$ is also a $d$-birational open immersion.
This is clear.

(4) Using (2), it suffices to show that the functor $\Spc^\fr(k) \to \Spc(k)$ commutes with $L_\bir^d$. Let us write $h^\fr: \Spc(k) \rightarrow \Spc^\fr(k)$ for the left adjoint of the previous functor; it is characterized as a sifted-colimit preserving functor which on sends the (motivic localization of the) presheaf represented by smooth $k$-scheme $X$ to the (motivic localization of the) presheaf $h^{\fr}(X)$ classifying tangentially framed correspondnences (see \cite[Definition 2.3.4]{EHKSY} and below).

Arguing as in (2), we apply Lemma \cite[Lemma 2.10]{bachmann-norms} to $\Spc^\fr(k)$.
Using that $\Spc^\fr(k)$ is semiadditive \cite[Proposition 3.2.10(iii)]{EHKSY} and the localization $L_\bir^d$ on $\Spc(k)$ is symmetric monoidal, it is enough to prove that if $f: X \to Y$ is a $d$-birational open immersion of smooth quasi-projective $k$-schemes, then $h^\fr(f): h^\fr(X) \to h^\fr(Y)$ becomes an equivalence in $L_\bir^d \Spc(k)$.
By \cite[Corollary 2.3.25 and Theorem 5.1.8]{EHKSY}, it is enough to show that $\Hilb^\fr(\A^\infty, X) \to \Hilb^\fr(\A^\infty, Y)$ is an $L^d_\bir$-equivalence.
This follows from Lemma \ref{lemm:Hilb-pres-bir}.

(5) Using Lemma \ref{lemm:stable-loc} below, it suffices to show the ``in fact'' part.
Since $\alpha: X \times (\A^n \setminus 0) \to X \times \A^n$ is $(n-1)$-birational and $\cof(\alpha) \wequi X_+ \wedge T^n$, one inclusion is clear; for the other one it is enough to show that if $U \hookrightarrow X$ is a $d$-birational open immersion, then $X/U \in \SHS(k)(d+1)$.
This is well-known; we include the proof for the convenience of the reader.
Let $Z = X \setminus U$; we shall prove the claim by induction on $\dim Z$.
By generic smoothness \cite[Tag 0B8X]{stacks-project}, there exists a smooth dense open $U' \subset Z$; let $Z' = Z \setminus U'$.
The cofiber sequence \[ X \setminus Z'/U \setminus Z' \to X/U \to X/X \setminus Z' \] implies that it is enough to show that $X \setminus Z'/U \setminus Z', X/X \setminus Z' \in \SHS(k)(d+1)$.
For the former this follows from homotopy purity \cite[Theorem 3.2.23]{A1-homotopy-theory}, for the latter it holds by induction.

This concludes the proof.
\end{proof}

We made use of the following technical result, which is surely well-known.
\begin{lemma}\label{lemm:stable-loc} \todo{reference?!}
Let $\scr C$ be a stable presentable $\infty$-category, and $S$ a small set of morphisms in $\scr C$ closed under desuspension.
Write $S^0$ for the localizing subcategory of $\scr C$ generated by cofibers of morphisms in $S$.
Then the strong saturation of $S$ consists of those maps $\alpha$ with $\cof(\alpha) \in S^0$.
\end{lemma}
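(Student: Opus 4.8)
The plan is to identify the localization of $\scr C$ at $S$ with the Bousfield localization that annihilates $S^0$; the description of the strong saturation then follows formally.

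Write $L\colon \scr C \to \scr C^S$ for the localization onto the full subcategory of $S$-local objects. By \cite[\S 5.5.4]{lurie-htt} the strong saturation $\bar S$ of $S$ is exactly the class of morphisms inverted by $L$. It therefore suffices to prove that $L$ is exact with $\ker(L) = S^0$: granting this, $\alpha$ is an $L$-equivalence if and only if $\cof(L\alpha) = L(\cof\alpha) \wequi 0$, i.e.\ if and only if $\cof(\alpha) \in \ker(L) = S^0$, which is the assertion of the lemma. To analyze $\scr C^S$ I would use the hypothesis that $S$ is closed under desuspension: applying $S$-locality of $Y$ to all desuspensions $\Sigma^{-n}f$ of a map $f \in S$ and letting $n \to \infty$ on homotopy groups shows that $Y$ is $S$-local if and only if the mapping spectrum $\map_{\scr C}(\cof(f), Y)$ vanishes for every $f \in S$. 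As $\{Z : \map_{\scr C}(Z, Y) \wequi 0\}$ is a localizing subcategory of $\scr C$, this condition holds if and only if $Y$ lies in the right orthogonal $(S^0)^\perp$; hence $\scr C^S = (S^0)^\perp$.

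It remains to record that the Bousfield localization $L$ onto $(S^0)^\perp$ --- for $S^0$ a presentable localizing subcategory of the presentable stable category $\scr C$ --- is exact with kernel $S^0$. The subcategory $S^0$ is presentable because it is generated under colimits by the small set $\{\Sigma^n\cof(f) : n \in \Z,\ f \in S\}$, so the localization is accessible; it is exact because $(S^0)^\perp$ is again a stable subcategory; and $\ker(L) = S^0$ is the usual recollement statement, since the fiber of the unit $X \to LX$ lies in $S^0$ (so $LX \wequi 0$ forces $X \in S^0$) while conversely $X \in S^0$ forces $LX \in S^0 \cap (S^0)^\perp = 0$.

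I expect the one point deserving genuine care to be the identification $\scr C^S = (S^0)^\perp$: this is exactly where the hypothesis that $S$ is closed under desuspension is used, and it is what makes it legitimate to trade ``$\alpha$ is inverted'' for ``$\cof(\alpha)$ is killed''. Everything downstream is a formal manipulation of recollements and strongly saturated classes.
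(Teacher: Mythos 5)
Your proof is correct, but it follows a genuinely different route from the paper's. The paper never invokes Bousfield localization, local objects, or recollements: it works entirely on the level of \emph{strongly saturated classes}. Concretely, the paper assigns to any strongly saturated class $A$ stable under desuspension the full subcategory $A^1 = \{X : (0\to X)\in A\}$, observes that $A^1$ is localizing and coincides with the localizing subcategory generated by $\{\cof(\alpha) : \alpha\in A\}$, and then shows that $A\mapsto A^1$ is an inclusion-preserving bijection onto localizing subcategories. The lemma drops out by transporting ``smallest strongly saturated class stable under desuspension containing $S$'' across the bijection.

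You instead pass to the local objects $\scr C^S$, identify them with $(S^0)^\perp$ using the desuspension hypothesis (the argument that letting $n\to\infty$ in $\pi_*\map(\cof(\Sigma^{-n}f),Y)$ upgrades space-level locality to vanishing of the mapping spectrum is exactly the right move), then prove the reflection $L$ is exact with $\ker L = S^0$, and finally read off the strong saturation as $\{\alpha : L\alpha\text{ is an equivalence}\}$. This is a sound argument, but it leans on more machinery: the exactness of $L$ uses that $(S^0)^\perp$ is reflective and closed under $\Sigma$ (hence a stable subcategory, hence the left adjoint $L$ is exact), and the claim $\ker L = S^0$ is the recollement fiber sequence $ii^!X\to X\to LX$, which in turn needs the right adjoint $i^!$ to the inclusion $i\colon S^0\hookrightarrow\scr C$ (available since $S^0$ is presentable and $i$ is colimit-preserving). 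What you gain over the paper is an explicit description of the local objects as $(S^0)^\perp$, which the paper's purely combinatorial bijection never exhibits; what the paper gains is that it sidesteps existence of Bousfield localizations and recollements entirely, making the argument self-contained given only Lurie's formalism of strongly saturated classes.
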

\begin{proof}
Let $A$ be a strongly saturated class of morphisms stable under desuspensions, and write $A^1$ for the set of objects $X \in \scr C$ such that $0 \to X \in A$.
Then $A^1$ is closed under colimits and desuspensions, whence a localizing subcategory.
Moreover by stability\NB{i.e. the cofiber sequences $(X \to X) \to (X \to Y) \to (0 \to Y/X)$ and $(0 \to \Sigma^{-1} Y/X) \to (X \to X) \to (X \to Y)$ in the arrow category}, $\alpha: X \to Y$ is an $A$-equivalence (i.e. in $A$) if and only if $0 \to \cof(\alpha)$ is an $A$-equivalence.
It follows that $A^0=A^1$, and also that $A \mapsto A^0$ is an inclusion-preserving bijection between strongly saturated classes stable under desuspension and localizing subcategories.
Consequently strongly saturated classes stable under desuspension containing $S$ are in bijection with localizing subcategories containing $S^0$.
The result now follows from the observation that if $S$ is closed under desuspension, then so is its strong saturation.
\end{proof}

As usual we denote by $\SH(k)^\eff \subset \SH(k)$ the localizing subcategory generated by $\sigma^\infty \SHS(k)$ and put $\SH(k)^\eff(d) = \Gmp{d} \wedge \SH(k)^\eff$; this is equivalently the localizing subcategory generated by $\sigma^\infty \SHS(k)(d)$.
We put $L_\bir = L_\bir^0$.

\section{Proof of Conjecture~\ref{conj:1}} \label{sec:main}
Recall from \cite[\S4]{hoyois2018localization} the framed presheaf $\Z \in \PSh_\Sigma(\Corr^\fr(k))$, and from \cite{EHKSY,EHKSY3} the presheaf $\FSyn^\fr \wequi h^\fr(*)$.
There is an evident ``degree'' map of framed presheaves $\FSyn^\fr \to \Z$, factoring in fact through an evident sub-presheaf $\N$.
\begin{theorem} \label{thm:main}
Let $k$ be any field.
\begin{enumerate}
\item The map $\FSyn^\fr \to \N$ is an $L_\bir$-equivalence in $\Spc^{\fr}(k)$.
\item The map \[ \Sigma^\infty_\fr \FSyn^\fr \to \Sigma^\infty_\fr \Z \] identifies with the canonical map \[ \1 \to s_0(\1) \in \SH(k). \]
\end{enumerate}
\end{theorem}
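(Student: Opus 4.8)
The plan is to handle the two parts separately. Part~(1) is a direct consequence of the rationality of framed Hilbert schemes (Lemma~\ref{lemm:Hilb-rational}); part~(2) requires in addition an analysis of how $\Sigma^\infty_\fr$ interacts with the birational localization and with the slice filtration, and it is there that the only real difficulty lies.

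For part~(1): the forgetful functor $U\colon\Spc^\fr(k)\to\Spc(k)_*$ is conservative and commutes with $L_\bir$ (Lemma~\ref{lemm:bir-basics}(4)), so it suffices to show $U\FSyn^\fr\to U\N$ is an $L_\bir$-equivalence in $\Spc(k)$. Using $\FSyn^\fr\simeq h^\fr(*)$ together with \cite[Corollary 2.3.25 and Theorem 5.1.8]{EHKSY}, I would identify $U\FSyn^\fr$ with the motivic localization of $\Hilb^\fr(\A^\infty,*)=\coprod_{d\ge 0}\Hilb^\fr_d(\A^\infty,*)$ (with $\Hilb^\fr_d(\A^\infty,*)=\colim_n\Hilb^\fr_d(\A^n,*)$), under which the degree map to $\N=\coprod_{d\ge 0}*$ is the coproduct of the structure maps $\Hilb^\fr_d(\A^\infty,*)\to *$. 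By Lemma~\ref{lemm:Hilb-rational} each $\Hilb^\fr_d(\A^n,*)$ is rational, hence — affine spaces being $\A^1$-contractible and $L_\bir$-equivalences satisfying two-out-of-three — the map $\Hilb^\fr_d(\A^n,*)\to *$ is an $L_\bir$-equivalence; since the class of $L_\bir$-equivalences is strongly saturated it is closed under the filtered colimit over $n$ and the coproduct over $d$, which gives the claim.

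For part~(2): first, $\Sigma^\infty_\fr\FSyn^\fr=\Sigma^\infty_\fr h^\fr(*)=\Sigma^\infty_\fr F(*_+)=\Sigma^\infty(S^0)=\1$, using that the diagram of left adjoints commutes (so $\Sigma^\infty_\fr F=\Sigma^\infty$) and $h^\fr=F\circ(-)_+$. Next, $s_0\1=\cof(f_1\1\to\1)$ is the Bousfield localization of $\1$ away from $\SH(k)^\eff(1)$: it is right orthogonal to $\SH(k)^\eff(1)$, the map $\1\to s_0\1$ has cofiber in $\SH(k)^\eff(1)$, and — by Lemma~\ref{lemm:stable-loc} together with the purity argument from the proof of Lemma~\ref{lemm:bir-basics}(5), which works verbatim in $\SH(k)$ and shows $\Sigma^\infty_{\P^1}(X/U)\in\SH(k)^\eff(1)$ for every $0$-birational open immersion $U\hookrightarrow X$ — the associated class of equivalences equals $\{\alpha:\cof(\alpha)\in\SH(k)^\eff(1)\}$ and is generated by the maps $\Sigma^\infty_{\P^1}(f_+)$. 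By the universal property of this localization it therefore suffices to prove (a) $\cof(\1\to\Sigma^\infty_\fr\Z)\in\SH(k)^\eff(1)$, and (b) $\Sigma^\infty_\fr\Z$ is right orthogonal to $\SH(k)^\eff(1)$, i.e.\ $f_1\Sigma^\infty_\fr\Z=0$.

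Claim~(a) follows from part~(1): since $\Sigma^\infty_\fr F=\Sigma^\infty$ sends the generating $L_\bir$-equivalences $Ff$ to $\Sigma^\infty_{\P^1}(f_+)$, whose cofibers lie in $\SH(k)^\eff(1)$ by purity, and the class of maps with cofiber in $\SH(k)^\eff(1)$ is strongly saturated (Lemma~\ref{lemm:stable-loc}), we get $\cof(\Sigma^\infty_\fr\FSyn^\fr\to\Sigma^\infty_\fr\N)\in\SH(k)^\eff(1)$; and $\Sigma^\infty_\fr\N\simeq\Sigma^\infty_\fr\Z$ because $\Z$ is the group completion of $\N$ in $\Spc^\fr(k)$ and the stabilization $\Sigma^\infty_\fr$ inverts group completions. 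Claim~(b) is the main obstacle. Writing $\SH(k)^\eff(1)=\Gmp{1}\wedge\SH(k)^\eff$ and using that it is generated, under colimits, extensions and (de)suspensions, by the $\Sigma^\infty_{\P^1}(\Gm\wedge X_+)$ ($X\in\Sm_k$), one reduces $f_1\Sigma^\infty_\fr\Z=0$ to the contractibility of $\Map_{\SH(k)}(\Sigma^\infty_{\P^1}(\Gm\wedge X_+)[m],\Sigma^\infty_\fr\Z)$ for all $X$ and all $m\in\Z$. For $m\ge 0$ this rewrites, via $\Sigma^\infty_\fr F=\Sigma^\infty_{\P^1}$ and the recognition principle $\Omega^\infty_\fr\Sigma^\infty_\fr\Z\simeq\Z$ (as $\Z$ is grouplike), as $\Map_{\Spc(k)_*}(S^m\wedge\Gm\wedge X_+,\ul\Z)\simeq *$ where $\ul\Z=U\Z$; and this holds because $\ul\Z$ is the constant Nisnevich sheaf and $\Gm$ is geometrically connected, so the reduced $\Gm$-mapping space into $\ul\Z$ is contractible. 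The case $m<0$ is the delicate step: writing $\Sigma^\infty_\fr\Z[-m]=\Sigma^\infty_\fr(\Sigma^{-m}_{S^1}\Z)$ (as $\Sigma^\infty_\fr$ commutes with $\Sigma_{S^1}$) and applying the recognition principle to the grouplike framed space $\Sigma^{-m}_{S^1}\Z$, it reduces to the contractibility of the reduced $\Gm$-mapping space of $U\Sigma^{-m}_{S^1}\Z$. Here the naive reduction to the case $m=0$ fails because $\Gm$-loops do not commute with $S^1$-suspension; I expect this step to require the explicit description of the framed presheaf $\Z$ from \cite{hoyois2018localization} — equivalently, the vanishing of $\Sigma^\infty_\fr\Z$-cohomology in negative weights (that $\Gm^{-1}\wedge\Sigma^\infty_\fr\Z$ is anti-effective) — where a cancellation-type statement for framed correspondences enters. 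Granting (a) and (b), the universal property identifies $\1=\Sigma^\infty_\fr\FSyn^\fr\to\Sigma^\infty_\fr\Z$ with the canonical map $\1\to s_0\1$, as required.
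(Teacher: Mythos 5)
Part~(1) of your proposal is essentially the paper's argument: reduce by Lemma~\ref{lemm:bir-basics}(2,4) to the underlying unpointed motivic space, identify it with $\coprod_d \Hilb^\fr_d(\A^\infty,*)$, and invoke Lemma~\ref{lemm:Hilb-rational}.

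For part~(2) the paper takes a noticeably different route, and your version has a genuine gap exactly where you flag it. Two issues. First, a missing reduction: the paper begins by observing that all terms are stable under essentially smooth base change (\cite[Lemma B.1]{bachmann-norms}, \cite[Lemma 20]{hoyois2018localization}) and thereby reduces to $k$ perfect. You never make this reduction, yet you invoke ``the purity argument from the proof of Lemma~\ref{lemm:bir-basics}(5)'' to get claim~(a); that argument uses generic smoothness and so requires $k$ perfect. More importantly, for claim~(b) the paper does not attempt the mapping-space analysis you set up. It instead computes the weight-zero homotopy sheaves via the recognition principle: $\Omega^\infty_\fr\Sigma^\infty_\fr\Z\simeq\Z$ (discrete, grouplike) gives $\ul\pi_0(\Sigma^\infty_\fr\Z)_0=\Z$ and $\ul\pi_*(\Sigma^\infty_\fr\Z)_0=0$ for $*\ne 0$ (using Morel connectivity for $*<0$). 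Since $\Sigma^\infty_\fr\Z$ is effective, its homotopy sheaves form a homotopy module, so $\ul\pi_*(\Sigma^\infty_\fr\Z)_{-1}$ is the Morel contraction of a constant sheaf, hence zero, and iterating kills all negative weights. Unramifiedness of strictly $\A^1$-invariant sheaves then upgrades this sheaf-level vanishing to the orthogonality $f_1\Sigma^\infty_\fr\Z=0$. This one-line argument completely sidesteps the $m<0$ case in your analysis, which — as you correctly observe — cannot be resolved by the recognition principle alone, because $\Omega_{\Gm}$ does not commute with $S^1$-suspension; you would indeed need cancellation-type input if you insisted on proceeding level by level. As it stands, claim~(b) in your proposal is asserted rather than proven, so the proof is incomplete.
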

\begin{proof}
(1) By Lemma \ref{lemm:bir-basics}(2,4), it suffices to show that the underlying map of unpointed motivic spaces is an $L_\bir$-equivalence.
Since it is the coproduct of the maps $\FSyn^\fr_d \to *$, and $\FSyn^\fr_d \stackrel{L_\mot}{\wequi} \Hilb^{\fr}_d(\A^\infty, *)$ \cite[Corollary 2.3.25]{EHKSY}, it suffices to show that each $\Hilb^{\fr}_d(\A^n, *)$ is rational.
This is Lemma \ref{lemm:Hilb-rational}.

(2) By \cite[Lemma B.1]{bachmann-norms} and \cite[Lemma 20]{hoyois2018localization}, all terms are stable under essentially smooth base change, so we may assume that $k$ is perfect.
It follows from (1) and Lemma \ref{lemm:bir-basics}(6) that $\1 \wequi \Sigma^\infty_\fr \FSyn^\fr \to \Sigma^\infty_\fr \N$ induces an equivalence on $s_0$.
Moreover $\Sigma^\infty_\fr \N \wequi \Sigma^\infty_\fr \Z$, since $\Z$ is the group completion of $\N$. It thus remains to show that $\Sigma^\infty_\fr \Z$ is right orthogonal to $\SH(k)^\eff(1)$.  But now, $\ul{\pi}_*(\Sigma^\infty_\fr \Z)_0 = 0$ if $* \ne 0$ and $= \Z$ else, so $\ul{\pi}_*(\Sigma^\infty_\fr \Z)_{-1} = 0$, which implies what we want since homotopy sheaves are unramified.

This concludes the proof.
\end{proof}

\begin{proof} [Proof of Conjecture~\ref{conj:1}]
The unit map $u: \1 \to \KGL$ induces a map $s_0(u): s_0(\1) \to s_0(\KGL)$.
Theorem~\ref{thm:main}(2), Conjecture~\ref{conj:2} (to be proved in the next section) and Voevodsky's arguments from \cite{voevodsky293possible} (see also Example \ref{ex:KGL}) imply that $s_0(u)$ is an equivalence.
The zero slice of $\KGL$ is identified with higher Chow groups in \cite[Theorem 6.4.1]{levine2008homotopy}; whence the result.
\end{proof}
\begin{remark} \label{rem:cyc} Theorem~\ref{thm:main}(2) shows that if $E \in \SH(k)^\eff$ is provided with a map $\1 \to E$ such that $\ul{\pi}_i(E)_0 = 0$ for $i \ne 0$ and $\ul{\pi}_0(\1)_0 \to \ul{\pi}_0(E)_0$ is isomorphic to $\ul{GW} \to \Z$, then the induced maps $s_0(\1) \to s_0(E) \leftarrow E$ are both equivalences.
Indeed for the second equivalence we just need to show that $f_1 E \wequi 0$, which follows from $\ul{\pi}_i(E)_{-1} = 0$ (since it is the contraction of the zero sheaf), and for the first equivalence we need only verify that we get an isomorphism on $\ul{\pi}_i(\ph)_0$, which is now immediate from the theorem.

In principle, one can identify $s_0(\1)$ with the spectrum representing higher Chow groups by verifying that the latter do satisfy these properties. However, at some point it needs to be proved that the spectrum representing higher Chow groups (even granting its existence) is effective.
Voevodsky proves this (in characteristic zero) by studying the birational geometry of motivic Eilenberg MacLane spaces, and Levine deduces this from his homotopy coniveau tower theory.
We have no new arguments for this. In effect, our argument replaces the deduction of $s_0(\1)$ from $s_0(\KGL)$ in \cite[\S 10]{levine2008homotopy} involving the ``reverse cycle map."
\end{remark}

\section{Proof of Conjecture~\ref{conj:2}}
For $d<0$, we define $\SHS(k)(d) = \SHS(k)$, and $f_d = \id: \SHS(k) \to \SHS(k)(d)$. The next result is Conjecture~\ref{conj:2}. In Levine's approach \cite{levine2008homotopy}, he directly proves Corollary \ref{cor:final} below, which immediately implies the theorem.

\begin{theorem} \label{thm:levine}
Let $k$ be a perfect field.
Then \[ \omega^\infty(\SH(k)^\eff(d+1)) \subset \SHS(k)(d+1). \]
\end{theorem}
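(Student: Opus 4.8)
The statement is Voevodsky's Conjecture~\ref{conj:2}, so I expect this to be the technical heart of the paper. The plan is to strip away everything formal until only one geometric assertion remains --- that $\omega^\infty$ turns $\Sigma^\infty$ of a $d$-birational open immersion into an $L_\bir^d$-equivalence --- and then to dispatch that assertion with the Hilbert-scheme results of Section~\ref{sec:hilbert}. First dispose of $d<0$ (then $\SHS(k)(d+1)=\SHS(k)$, or $d+1=0$ and the claim is $\omega^\infty\SH(k)^\eff\subset\SHS(k)$), so assume $d\ge 0$. Now $\omega^\infty$ is exact, being a right adjoint between stable $\infty$-categories, and it preserves filtered colimits, since its left adjoint $\sigma^\infty$ sends the compact generators $\Sigma^{\infty+n}_{S^1}X_+$ of $\SHS(k)$ to the compact objects $\Sigma^{\infty+n}X_+$ of $\SH(k)$; hence $\omega^\infty$ preserves all colimits. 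Consequently $(\omega^\infty)^{-1}(\SHS(k)(d+1))$ is a localizing subcategory of $\SH(k)$, and as $\SH(k)^\eff(d+1)$ is generated as a localizing subcategory by the objects $\Sigma^\infty X_+\wedge\Gmp{d+1}$ for $X\in\Sm_k$ quasi-projective, it suffices to prove $\omega^\infty(\Sigma^\infty X_+\wedge\Gmp{d+1})\in\SHS(k)(d+1)$ for each such $X$.

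Next I would reduce to a statement about $d$-birational open immersions. Write $\alpha\colon X\times(\A^{d+1}\setminus 0)\hookrightarrow X\times\A^{d+1}$; its complement has codimension $d+1$, so $\alpha$ is $d$-birational, and $\cof(\Sigma^\infty\alpha_+)\wequi\Sigma^\infty X_+\wedge\Gmp{d+1}\wedge S^{d+1}$. Since $\omega^\infty$ is exact, $\SHS(k)(d+1)$ is shift-stable, and $k$ is perfect so that Lemma~\ref{lemm:bir-basics}(5) applies, it is enough to show that $\omega^\infty\Sigma^\infty\alpha_+$ is an $L_\bir^d$-equivalence in $\SHS(k)$; then $\omega^\infty(\Sigma^\infty X_+\wedge\Gmp{d+1})\wequi\cof(\omega^\infty\Sigma^\infty\alpha_+)[-(d+1)]$ lies in $\SHS(k)(d+1)$. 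I would in fact prove this for an \emph{arbitrary} $d$-birational open immersion $\alpha\colon U\hookrightarrow X$ of smooth quasi-projective $k$-schemes. Using $\Sigma^\infty=\Sigma^\infty_\fr\circ F$ we have $\Sigma^\infty\alpha_+\wequi\Sigma^\infty_\fr(F\alpha_+)$, and $F\alpha_+$ is an $L_\bir^d$-equivalence in $\Spc^\fr(k)$ by Lemma~\ref{lemm:bir-basics}(1) (as $\alpha_+$ is one by definition of $L_\bir^d\Spc(k)_*$). So the whole theorem comes down to the single claim that $\omega^\infty\Sigma^\infty_\fr\colon\Spc^\fr(k)\to\SHS(k)$ preserves $L_\bir^d$-equivalences.

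This last claim is where motivic infinite loop space theory \cite{EHKSY, garkusha2015homotopy} enters. For $Z\in\Sm_k$ the $S^1$-spectrum $\omega^\infty\Sigma^\infty Z_+\wequi\omega^\infty\Sigma^\infty_\fr h^\fr(Z)$ is computed --- functorially in $Z$ --- by the framed motive of $Z$, built levelwise (up to $L_\mot$) from the framed correspondence spaces, which by \cite[Theorem 5.1.8]{EHKSY} are motivically equivalent to the ind-smooth Hilbert schemes $\Hilb^\fr(\A^\infty,Z)=\colim_n\coprod_m\Hilb^\fr_m(\A^n,Z)$. For a $d$-birational open immersion $\alpha\colon U\hookrightarrow X$, the map $\omega^\infty\Sigma^\infty\alpha_+$ is thus assembled --- levelwise and up to $L_\mot$ --- from the maps $\Hilb^\fr_m(\A^n,U)\to\Hilb^\fr_m(\A^n,X)$, which are $d$-birational open immersions by Lemma~\ref{lemm:Hilb-pres-bir}, hence $L_\bir^d$-equivalences of motivic spaces. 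Writing an $\Omega$-spectrum as $\colim_n\Sigma^{-n}\Sigma^\infty_{S^1}(\text{level }n)$, using that $\Sigma^\infty_{S^1}$ preserves $L_\bir^d$-equivalences (Lemma~\ref{lemm:bir-basics}(1)), and that $L_\bir^d$-equivalences are closed under shifts and colimits, one concludes that $\omega^\infty\Sigma^\infty\alpha_+$ is an $L_\bir^d$-equivalence in $\SHS(k)$. Lemma~\ref{lemm:bir-basics}(5) then places its cofiber in $\SHS(k)(d+1)$, completing the argument.

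The formal parts (the reductions of the first two paragraphs) are routine. The main obstacle is the bookkeeping in the third paragraph: one must make precise the \emph{spectrum-level} --- not merely infinite-loop-space-level --- incarnation of the framed model of $\omega^\infty\Sigma^\infty(-)_+$, and the exact sense in which it is ``built levelwise from framed Hilbert schemes'', so that the $d$-birationality supplied by Lemma~\ref{lemm:Hilb-pres-bir} genuinely propagates to an $L_\bir^d$-equivalence of $S^1$-spectra rather than only of infinite loop spaces. Equivalently, one needs that $\omega^\infty\Sigma^\infty_\fr$ (or, after identifying it via the reconstruction/cancellation theorems for framed spectra with a framed suspension functor) carries the generating $L_\bir^d$-equivalences $Ff$ to $L_\bir^d$-equivalences in $\SHS(k)$; since $\omega^\infty\Sigma^\infty_\fr$ is colimit-preserving, checking this on generators suffices, and this check is precisely the place where the full strength of the assumed motivic infinite loop space theory --- beyond the elementary birational geometry of Section~\ref{sec:hilbert} --- is used.
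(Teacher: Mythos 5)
Your reductions are exactly the paper's: dispose of $d<0$, use Lemma \ref{lemm:bir-basics}(5) to reduce to showing that $\omega^\infty\Sigma^\infty f_+$ is an $L_\bir^d$-equivalence for $f$ an arbitrary $d$-birational open immersion of smooth quasi-projective schemes, and then note that $\Sigma^\infty f_+\wequi\Sigma^\infty_\fr F(f_+)$ with $F(f_+)$ already an $L_\bir^d$-equivalence in $\Spc^\fr(k)$. The one step you flag as remaining ``bookkeeping'' is precisely what the paper supplies as Lemma \ref{lemm:gc-technical}, and your sketch of it is not quite right as stated: the higher levels of the $\Omega$-spectrum $\omega^\infty\Sigma^\infty_\fr\scr X$ are $\Omega^\infty\Sigma^\infty_\fr\Sigma^n\scr X$, i.e.\ involve framed correspondences into the $S^1$-suspensions $\Sigma^n\scr X$, which are not (motivically equivalent to) the Hilbert schemes $\Hilb^\fr(\A^\infty,Z)$ of a smooth scheme. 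The paper resolves this by the motivic recognition principle together with semiadditivity of $\Corr^\fr(k)$: $\Omega^\infty\Sigma^\infty_\fr\Sigma^n\scr X\wequi L_\mot B^n\scr X$, and the bar construction $B^n\scr X$ is a sifted colimit of \emph{finite products} $\scr X^{\times m}$. This is why the paper's proof cites Lemma \ref{lemm:bir-basics}(3) (stability of $L_\bir^d$-equivalences under finite products) and (4) (that $U$ commutes with $L_\bir^d$ --- which is where Lemma \ref{lemm:Hilb-pres-bir} actually enters, applied to $Uh^\fr(X)\wequi\Hilb^\fr(\A^\infty,X)$), neither of which appears in your paragraph three. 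So: same architecture, correct identification of the geometric input, but the spectrum-level model you defer is the genuine content of the remaining step, and the mechanism that makes it work is the bar-construction/recognition-principle argument rather than a direct levelwise Hilbert-scheme description.
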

\begin{proof}
If $d < 0$ there is nothing to show.
Otherwise by Lemma \ref{lemm:bir-basics}(5), it suffices to show that if $f: X \to Y$ is a $d$-birational open immersion of smooth, quasi-projective $k$-schemes, then $\omega^\infty \Sigma^\infty_+ f \in \SHS(k)$ is an $L_\bir^d$-equivalence.
Using Lemma \ref{lemm:gc-technical} below, this follows from Lemma \ref{lemm:bir-basics}(1,3,4).
\end{proof}

The following technical result is a variant of \cite[Proposition 4.4]{voevodsky293possible}.
\begin{lemma} \label{lemm:gc-technical}
Let $\scr X \in \Spc^\fr(k)$ (where $k$ is a perfect field).
Then $\omega^\infty \Sigma^\infty_\fr \scr X \in \SHS(k)$ can be obtained as a colimit of $S^1$-desuspensions of suspension spectra of the form $\Sigma^\infty_{S^1} U\scr X^{\times n}$.
Moreover this expression is natural in $\scr X$.
\end{lemma}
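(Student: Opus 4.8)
The plan is to realize $\omega^\infty\Sigma^\infty_\fr\scr X$ as the $S^1$-spectrification of the prespectrum formed by its own spaces in positive degrees, to identify those spaces by means of the recognition principle of motivic infinite loop space theory, and then to rewrite each of them as a geometric realization of products of $U\scr X$ using the standard simplicial model of the circle together with the semiadditivity of $\Spc^\fr(k)$.

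\textbf{Step 1: extracting the prespectrum.} From the Notation section the diagrams of left and of right adjoints commute, so $\Omega^\infty = \Omega^\infty_{S^1}\circ\omega^\infty = U\circ\Omega^\infty_\fr$; moreover $\Sigma^\infty_\fr$ and $\omega^\infty$ commute with $\Sigma_{S^1}$ ($\Sigma^\infty_\fr$ is colimit-preserving and pointed, $\omega^\infty$ is exact). Hence the degree-$n$ space of the $S^1$-spectrum $\omega^\infty\Sigma^\infty_\fr\scr X$ is
\[ \Omega^\infty_{S^1}\Sigma^n_{S^1}\,\omega^\infty\Sigma^\infty_\fr\scr X \;\wequi\; \Omega^\infty\Sigma^\infty_\fr(\Sigma^n_{S^1}\scr X) \;\wequi\; U\,\Omega^\infty_\fr\Sigma^\infty_\fr(\Sigma^n_{S^1}\scr X) \;\wequi\; U\bigl((\Sigma^n_{S^1}\scr X)^\gp\bigr), \]
the last step being the recognition principle $\Omega^\infty_\fr\Sigma^\infty_\fr\wequi(-)^\gp$, valid since $k$ is perfect. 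For $n\ge 1$ the framed space $\Sigma^n_{S^1}\scr X$ is connected, hence grouplike, so this space is just $U\Sigma^n_{S^1}\scr X$. As any $S^1$-spectrum $E$ satisfies $E\wequi\colim_n\Sigma^{-n}_{S^1}\Sigma^\infty_{S^1}\Omega^\infty_{S^1}\Sigma^n_{S^1}E$ with the colimit cofinal in $n\ge 1$, I obtain a natural equivalence $\omega^\infty\Sigma^\infty_\fr\scr X\wequi\colim_{n\ge 1}\Sigma^{-n}_{S^1}\Sigma^\infty_{S^1}\bigl(U\Sigma^n_{S^1}\scr X\bigr)$.

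\textbf{Step 2: unwinding $U\Sigma^n_{S^1}\scr X$.} As a pointed simplicial set the circle is the realization of $[m]\mapsto\bigvee_m S^0$, so in any presentable pointed $\infty$-category $\Sigma_{S^1}\scr Y$ is the geometric realization of $[m]\mapsto\scr Y^{\vee m}$; since $\Spc^\fr(k)$ is semiadditive, $\scr Y^{\vee m}\wequi\scr Y^{\times m}$ there and $\Sigma_{S^1}$ commutes with finite products. Iterating $n$ times, $\Sigma^n_{S^1}\scr X$ is the realization of the $n$-fold simplicial object $[m_1,\dots,m_n]\mapsto\scr X^{\times(m_1\cdots m_n)}$. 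Now $U$ preserves finite products (being a right adjoint) and geometric realizations (it is compatible with the motivic localizations defining its source and target, and forgetting framed transfers preserves pointwise sifted colimits), and $\Sigma^\infty_{S^1}$ is colimit-preserving; so $\Sigma^\infty_{S^1}(U\Sigma^n_{S^1}\scr X)$ is the realization of $[m_1,\dots,m_n]\mapsto\Sigma^\infty_{S^1}\bigl((U\scr X)^{\times(m_1\cdots m_n)}\bigr)$. Substituting into the formula of Step 1 presents $\omega^\infty\Sigma^\infty_\fr\scr X$ as a colimit, indexed by $n\ge 1$ and $(m_1,\dots,m_n)\in(\Delta^\op)^n$, of the objects $\Sigma^{-n}_{S^1}\Sigma^\infty_{S^1}\bigl((U\scr X)^{\times(m_1\cdots m_n)}\bigr)$, which are exactly of the asserted form. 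Every construction in sight is functorial in $\scr X$, giving the naturality clause.

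\textbf{Main obstacle.} The substantive point is Step 1: it rests on the recognition principle $\Omega^\infty_\fr\Sigma^\infty_\fr\wequi(-)^\gp$ --- the one genuinely deep input, and the reason $k$ must be perfect --- and on the identity $\Omega^\infty=U\circ\Omega^\infty_\fr$; one must also check that the level-wise equivalences assemble into a single equivalence of $S^1$-spectra, which follows from the naturality of all the maps involved. A secondary technical point to verify with some care is that $U\colon\Spc^\fr(k)\to\Spc(k)_*$ preserves geometric realizations; the remaining ingredients (the simplicial model of $S^1$, semiadditivity of $\Spc^\fr(k)$, exactness of $\omega^\infty$, colimit-preservation and pointedness of $\Sigma^\infty_\fr$) are formal.
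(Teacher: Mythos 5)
Your proof follows essentially the same route as the paper's: present the spectrum as a colimit of desuspended suspension spectra of its constituent spaces, identify each space via the motivic recognition principle, and unwind $\Sigma^n_{S^1}\scr X$ as an iterated bar construction using semiadditivity of $\Spc^\fr(k)$. Your Step 2 (simplicial circle and iterated realizations) is just an explicit spelling out of the paper's terser statement that $\Sigma^n\scr X\wequi B^n\scr X$.

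Two points you flag or gloss over are worth making precise, and the paper handles them slightly differently. First, your claim that $\Sigma^n_{S^1}\scr X$ is connected for $n\ge 1$ is correct but not for free: $\Spc(k)$ is not an $\infty$-topos, so connectivity of a suspension computed there requires that the $\A^1$-localization $L_\mot$ preserve connectedness. The paper addresses this by observing that the \emph{sectionwise} bar construction $B^n\scr X$ is sectionwise connected and then citing the unstable $\A^1$-connectivity theorem \cite[Corollary 3.3.22]{A1-homotopy-theory}; you should include that citation. Second, your ``secondary technical point'' --- that $U\colon\Spc^\fr(k)\to\Spc(k)_*$ preserves geometric realizations --- is where the paper instead invokes that the presheaf-level forgetful functor $\PSh_\Sigma(\Corr^\fr(k))\to\PSh_\Sigma(\Sm_k)$ preserves motivic equivalences \cite[Proposition 3.2.14]{EHKSY}. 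Both routes work (yours follows because colimits in the localized categories are computed by motivically localizing presheaf colimits, and the motivic equivalences are preserved), but the paper's choice of working at the presheaf level and only localizing at the end is cleaner because it sidesteps any question about colimit preservation for the right adjoint $U$. With these two justifications supplied, your argument is complete.
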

\begin{proof}
Writing $\omega^\infty \Sigma^\infty_\fr \scr X$ as the colimit of desuspensions of its constituent spaces, we have \[ \omega^\infty \Sigma^\infty_\fr \scr X \wequi \colim_n \Sigma^{-n} \Sigma^\infty_{S^1} \Omega^\infty \Sigma^n \Sigma^\infty_\fr \scr X. \]
It is thus enough to prove that the spaces \[ \Omega^\infty \Sigma^\infty_\fr \Sigma^n \scr X \in \Spc(k)_* \] are of the desired form, say for $n \ge 1$.
We view $\scr X \in \PSh_\Sigma(\Corr^\fr(k))$.
It follows from the motivic recognition principle \cite[Theorem 3.5.14]{EHKSY} that \[ \Omega^\infty \Sigma^\infty_\fr \Sigma^n \scr X \wequi L_\mot^\gp \Sigma^n \scr X. \]
Writing $\Sigma^n \scr X$ as an iterated sifted colimit and using semiadditivity, we find that $\Sigma^n \scr X \wequi B^n \scr X$, where the bar construction $B^n$ is just applied sectionwise.
In particular, $\Sigma^n \scr X \wequi B^n \scr X$ is of the desired form.
Since the forgetful functor $\PSh_\Sigma(\Corr^\fr(k)) \to \PSh_\Sigma(\Sm_{k})$ preserves motivic equivalences \cite[Proposition 3.2.14]{EHKSY}, it suffices to show that $B^n \scr X$ is motivically group complete (i.e. $L_\mot B^n\scr X$ is group complete).
Since $n\ge 1$, $B^n \scr X$ is sectionwise connected (see e.g. \cite[Proposition 1.5]{segal1974categories}\NB{This reference is overkill (for a bisimplicial set with $X_{00} = *$, $|X_{\bullet\bullet}| \wequi diag X$ is connected since it has only one 0-simplex).}), and hence $L_\mot B^n \scr X$ is connected by \cite[Corollary 3.3.22]{A1-homotopy-theory}.
This concludes the proof.
\end{proof}

We deduce the following structural result, also originally due to Levine \cite[Theorem 9.0.3]{levine2008homotopy}.
Recall that $\SH(k) \wequi \SHS(k)[\Gmp{-1}]$ and so objects of $\SH(k)$ can be modeled by \emph{$\Gm$-$\Omega$-spectra}, i.e. sequences $(E_0, E_1, \dots)$ with $E_i \in \SHS(k)$, together with equivalences $E_i \wequi \Omega_\Gm E_{i+1}$.
\begin{corollary} \label{cor:final}
\begin{enumerate}
\item The functor $\omega^\infty: \SH(k) \to \SHS(k)$ commutes with the functors $f_d$ and $s_d$, for all $d \in \Z$.
\item If $E \in \SH(k)$ is represented by the $\Gm$-$\Omega$-spectrum $(E_0, E_1, \dots)$, then $f_d E$ is represented by the $\Gm$-$\Omega$-spectrum $(f_d E_0, f_{d+1} E_1, \dots)$.
  In particular $E$ is effective if and only if $E_i \in \SHS(k)(i)$.
\end{enumerate}
\end{corollary}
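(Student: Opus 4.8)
The plan is to deduce the corollary from Theorem~\ref{thm:levine} by formal manipulations with the slice filtration; no further geometry enters. Throughout I use the $\Gm$-$\Omega$-spectrum model of $\SH(k)$ recalled above, so that $\omega^\infty E = E_0$ and, more generally, $\omega^\infty \Sigma_\Gm^n E = E_n$, where $\Sigma_\Gm$ denotes the invertible $\Gm$-suspension on $\SH(k)$. Since $\Sigma_\Gm \SH(k)^\eff(d) = \SH(k)^\eff(d+1)$, the auto-equivalence $\Sigma_\Gm$ carries the colocalization $f_d$ to $f_{d+1}$, so that $\Sigma_\Gm^n f_d \simeq f_{d+n}\Sigma_\Gm^n$.

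For part (1), note first that $\omega^\infty$, being a right adjoint between stable $\infty$-categories, preserves finite limits and hence cofiber sequences; so once $\omega^\infty f_d \simeq f_d\omega^\infty$ is known for all $d$, it follows that $\omega^\infty s_d \simeq s_d\omega^\infty$, since $s_d = \mathrm{cofib}(f_{d+1}\to f_d)$. To construct the natural equivalence $\omega^\infty f_d E \simeq f_d\omega^\infty E$, I apply $\omega^\infty$ to the counit $f_d E \to E$ and verify the two properties characterizing the colocalization onto $\SHS(k)(d)$. For $d \ge 1$: the source $\omega^\infty f_d E$ lies in $\SHS(k)(d)$ because $f_d E \in \SH(k)^\eff(d)$ and $\omega^\infty(\SH(k)^\eff(d)) \subseteq \SHS(k)(d)$ by Theorem~\ref{thm:levine} (applied in degree $d-1$); and for any $T \in \SHS(k)(d)$ the adjunction identifies $\Map_{\SHS(k)}(T, \omega^\infty(-))$ with $\Map_{\SH(k)}(\sigma^\infty T, -)$, so since $\sigma^\infty T \in \SH(k)^\eff(d)$ (recalled at the end of Section~\ref{sec:birational}) the counit $f_d E \to E$ induces an equivalence on $\Map_{\SH(k)}(\sigma^\infty T, -)$, whence $\mathrm{cofib}(\omega^\infty f_d E \to \omega^\infty E)$ lies in the right orthogonal $\SHS(k)(d)^\perp$. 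For $d \le 0$ we have $\SHS(k)(d) = \SHS(k)$ and $f_d = \id$ on $\SHS(k)$, so it suffices to see that $C := \mathrm{cofib}(f_d E \to E)$ is killed by $\omega^\infty$: since $C$ is local with respect to $\SH(k)^\eff(d) = \Sigma_\Gm^d\SH(k)^\eff$, applying $\Sigma_\Gm^{-d}$ and the $(\sigma^\infty,\omega^\infty)$-adjunction gives $C_{-d} = \omega^\infty\Sigma_\Gm^{-d}C = 0$, and hence $\omega^\infty C = C_0 = \Omega_\Gm^{-d}C_{-d} = 0$ because $C$ is a $\Gm$-$\Omega$-spectrum and $-d \ge 0$. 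This proves part (1).

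Part (2) then follows by a reindexing argument: using the compatibility of $\Sigma_\Gm$ with the $f_d$ and then part (1),
\[
(f_d E)_n \;\simeq\; \omega^\infty\Sigma_\Gm^n f_d E \;\simeq\; \omega^\infty f_{d+n}\Sigma_\Gm^n E \;\simeq\; f_{d+n}\,\omega^\infty\Sigma_\Gm^n E \;=\; f_{d+n}(E_n),
\]
and since these equivalences are compatible with the structure maps, $f_d E$ is represented by $(f_d E_0, f_{d+1}E_1, \dots)$. The ``in particular'' is the case $d = 0$: $E$ is effective exactly when the counit $f_0 E \to E$ is an equivalence, and a map of $\Gm$-$\Omega$-spectra is an equivalence iff it is one in every level, so this holds iff $f_n E_n \to E_n$ is an equivalence for all $n$, that is, iff $E_n \in \SHS(k)(n)$ for all $n$.

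The only substantive ingredient is Theorem~\ref{thm:levine}, which is already in hand; the sole delicate point is the clash of conventions for $\SHS(k)(d)$ versus $\SH(k)^\eff(d)$ in negative degrees, which forces the separate treatment of the range $d \le 0$ in part (1).
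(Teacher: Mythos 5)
Your proof is correct and follows essentially the same strategy as the paper: the key input is Theorem~\ref{thm:levine} together with the $(\sigma^\infty,\omega^\infty)$-adjunction and the fact that $\sigma^\infty(\SHS(k)(d)) \subseteq \SH(k)^\eff(d)$. The paper packages this as ``$\omega^\infty$ is $t$-exact for the slice $t$-structures (via \cite[Proposition~1.4.4.11]{lurie-ha}), hence commutes with $\tau_{\ge 0} = f_d$,'' whereas you unwind exactly that argument into a direct verification of the universal property of the colocalization; your explicit treatment of $d \le 0$ is a minor bonus over the paper's terser wording, and part~(2) is the same reindexing computation.
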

\begin{proof}
(1) The categories $\SH(k)^\eff(d)$ and $\SHS(k)(d)$ define non-negative parts of $t$-structures by \cite[Proposition 1.4.4.11]{lurie-ha}.
By construction $\sigma^\infty$ is right-$t$-exact, and hence $\omega^\infty$ is left-$t$-exact.
Since it is also right-$t$-exact by Theorem \ref{thm:levine}, we deduce that $\omega^\infty$ is $t$-exact, i.e. commutes with $f_d$.
The claim about $s_d$ follows immediately.

(2) We have $E_i \wequi \omega^\infty(\Gmp{i} \wedge E)$.
Hence we compute \[ f_d(E)_i \wequi \omega^\infty(\Gmp{i} \wedge f_d E) \wequi \omega^\infty(f_{d+i}(\Gmp{i} \wedge E)) \wequi f_{d+i}(\omega^\infty(\Gmp{i} \wedge E)) \wequi f_{d+i}(E_i). \]
Here we have used part (1) for the third equivalence (which is the only non-trivial one in the string above). This concludes the proof.
\end{proof}

We also deduce the following principle, slightly generalizing an argument of Voevodsky. The analog in topology is the following fact: if $E$ is a connective spectrum, then it is $d$-connective as soon as its infinite loop space is $d$-connective. 

\begin{proposition} \label{prop:detect-eff}
Let $k$ be a perfect field and $E \in \SH(k)^\veff$.
Then $E \in \SH(k)^\eff(d)$ as soon as $\Sigma^\infty \Omega^\infty E \in \SH(k)^\eff(d)$.
\end{proposition}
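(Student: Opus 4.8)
Since $\SH(k)^\veff\subseteq\SH(k)^\eff\subseteq\SH(k)^\eff(d)$ for $d\le 0$, the statement is vacuous in that range and I would assume $d\ge 1$ throughout.

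My first step is a criterion detecting $d$-effectivity of a \emph{very effective} spectrum on its underlying $S^1$-spectrum: for $F\in\SH(k)^\veff$ one has $F\in\SH(k)^\eff(d)$ if and only if $\omega^\infty F\in\SHS(k)(d)$. The ``only if'' is Theorem~\ref{thm:levine}. For the ``if'', I would first note that $\omega^\infty$ is conservative on $\SH(k)^\veff$: if $\omega^\infty F=0$ then $U\Omega^\infty_\fr F=\Omega^\infty F=\Omega^\infty_{S^1}\omega^\infty F=\ast$, so $\Omega^\infty_\fr F=\ast$ since $U$ is conservative, whence $F\simeq\Sigma^\infty_\fr\Omega^\infty_\fr F=0$ by the recognition principle \cite[Theorem~3.5.14]{EHKSY}. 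Next, by Corollary~\ref{cor:final}(1) the functor $\omega^\infty$ commutes with $f_d$, and $f_d$ preserves very effective spectra for $d\ge 0$ (a standard compatibility of the slice filtration with the very effective $t$-structure), so $f_dF$ is again very effective. Hence, if $\omega^\infty F\in\SHS(k)(d)$, the map $\omega^\infty(f_dF\to F)$ identifies with $f_d(\omega^\infty F)\to\omega^\infty F$, which is an equivalence; by conservativity $f_dF\to F$ is then an equivalence, i.e.\ $F\in\SH(k)^\eff(d)$. Applying this to $E$, it remains to prove $\omega^\infty E\in\SHS(k)(d)$.

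The heart of the argument is to place $\sigma^\infty\omega^\infty E$ inside $\SH(k)^\eff(d)$. Since $E$ is very effective, $E\simeq\Sigma^\infty_\fr\Omega^\infty_\fr E$, so Lemma~\ref{lemm:gc-technical} presents $\omega^\infty E$ as a colimit of $S^1$-desuspensions of the suspension spectra $\Sigma^\infty_{S^1}(\Omega^\infty E)^{\times n}$. Applying the exact, colimit-preserving functor $\sigma^\infty$, and using $\sigma^\infty\Sigma^\infty_{S^1}=\Sigma^\infty$, I would obtain $\sigma^\infty\omega^\infty E$ as a colimit of $S^1$-desuspensions of the spectra $\Sigma^\infty\big((\Omega^\infty E)^{\times n}\big)$. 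By the reduced stable splitting of a finite product of pointed spaces, $\Sigma^\infty\big((\Omega^\infty E)^{\times n}\big)\simeq\bigoplus_{\emptyset\ne S\subseteq\{1,\dots,n\}}(\Sigma^\infty\Omega^\infty E)^{\wedge|S|}$, a finite sum of positive smash powers of $\Sigma^\infty\Omega^\infty E$. By hypothesis $\Sigma^\infty\Omega^\infty E\in\SH(k)^\eff(d)$; since $\SH(k)^\eff$ is closed under $\wedge$, this gives $(\Sigma^\infty\Omega^\infty E)^{\wedge m}\in\SH(k)^\eff(md)\subseteq\SH(k)^\eff(d)$ for all $m\ge 1$ (as $d\ge 0$). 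Because $\SH(k)^\eff(d)$ is a localizing subcategory — closed under finite sums, $S^1$-desuspensions and colimits — I conclude $\sigma^\infty\omega^\infty E\in\SH(k)^\eff(d)$.

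To finish, Theorem~\ref{thm:levine} gives $\omega^\infty\sigma^\infty\omega^\infty E\in\SHS(k)(d)$, and the triangle identity for $\sigma^\infty\dashv\omega^\infty$ exhibits $\omega^\infty E$ as a retract of $\omega^\infty\sigma^\infty\omega^\infty E$; since localizing subcategories are closed under retracts, $\omega^\infty E\in\SHS(k)(d)$, and the criterion of the second paragraph concludes. The one ingredient here that is not essentially formal is the preservation of very effectivity by $f_d$; I expect that to be the only point needing an external reference, everything else being a manipulation of Lemma~\ref{lemm:gc-technical}, Theorem~\ref{thm:levine}, Corollary~\ref{cor:final} and the recognition principle. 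Conceptually, the subtlety is that $\Omega^\infty E$ sees $E$ only through $\omega^\infty E$, so one must reconstruct $\omega^\infty E$ and then invoke the criterion rather than hope to control $E$ directly.
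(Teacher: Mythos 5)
Your argument follows the paper's proof very closely in its main steps: use Lemma~\ref{lemm:gc-technical} and the stable splitting of products to place $\sigma^\infty\omega^\infty E$ in the localizing subcategory generated by smash powers of $\Sigma^\infty\Omega^\infty E$, deduce $\sigma^\infty\omega^\infty E\in\SH(k)^\eff(d)$, use the triangle identity to exhibit $\omega^\infty E$ as a retract of $\omega^\infty\sigma^\infty\omega^\infty E$, and then descend from $\omega^\infty E\in\SHS(k)(d)$ to $E\in\SH(k)^\eff(d)$ via a criterion detecting $d$-effectivity on the underlying $S^1$-spectrum. The only divergence is in how that last criterion is established. The paper states it for all of $\SH(k)^\eff$ and deduces it from Corollary~\ref{cor:final} (the compatibility of $\omega^\infty$ with $f_d$ and the description of $f_d$ on $\Gm$-$\Omega$-spectra), whereas you restrict to $\SH(k)^\veff$ and run a conservativity argument: you show $\omega^\infty$ detects equivalences between very effective spectra via conservativity of $U$ and the recognition principle, and then need $f_dF$ to again be very effective so that $f_dF\to F$ is a map in $\SH(k)^\veff$. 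That last point --- ``$f_d$ preserves very effective spectra for $d\ge 0$'' --- is the one external input in your version; you are right to flag it, and it is a genuine additional fact (it is true over perfect fields, and it is also precisely the kind of statement that Corollary~\ref{cor:final} is designed to supplant). So your route is correct and conceptually transparent, but slightly less self-contained than the paper's: the paper deliberately packages what you need into Corollary~\ref{cor:final} so that no independent compatibility of the slice filtration with the very effective $t$-structure has to be invoked.
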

\begin{proof}
We may assume that $d \ge 0$.
Using the recognition principle and Lemma \ref{lemm:gc-technical}, or alternatively \cite[Proposition 4.4]{voevodsky293possible}, we find that $\omega^\infty E$ is in the localizing subcategory generated by suspension spectra of products of $\Omega^\infty E$.
Since \cite[Lemma 6.2.2 and Footnote 45]{morel-trieste} \[ \Sigma^\infty_{S^1}(X \times Y) \wequi \Sigma^\infty_{S^1}X \vee \Sigma^\infty_{S^1}Y \vee \Sigma^\infty_{S^1}X\wedge Y, \] this is equivalently the localizing subcategory generated by smash powers of $\Sigma^\infty_{S^1} \Omega^\infty E$.
It follows that $\sigma^\infty \omega^\infty E$ is in the localizing subcategory generated by smash powers of $\Sigma^\infty \Omega^\infty E$, and hence (1) $\sigma^\infty \omega^\infty E \in \SH(k)^\eff(d)$.
By the triangle identities, the composite \[ \omega^\infty E \to \omega^\infty \sigma^\infty \omega^\infty E \to \omega^\infty E \] is the identity, whence (2) $\omega^\infty E$ is a summand of $\omega^\infty \sigma^\infty \omega^\infty E$.

Corollary \ref{cor:final} implies that $F \in \SH(k)^\eff$ is $d$-effective if and only if $\omega^\infty F$ is $d$-effective.
Thus $\omega^\infty \sigma^\infty \omega^\infty E$ is $d$-effective by (1), whence so is $\omega^\infty E$ by (2), and hence so is $E$.

This concludes the proof.
\end{proof}

\begin{example} \label{ex:HZ}
We can use this to give a slightly different proof of Theorem \ref{thm:main} (i.e. determine $s_0(\1)$) over perfect fields.
Namely let $F$ denote the fiber of the degree map $\FSyn^{\fr,\gp} \to \Z$.
It is enough to show that $\Sigma^\infty F \in \SH(k)^\eff(1)$.
But by \cite{BEHKSY} we have \[ F \stackrel{L_\mot}{\wequi} \FSyn^{\fr+}_\infty \stackrel{L_\mot}{\wequi} \Hilb^{\fr}_\infty(\A^\infty,*)^+. \]
Since $\Sigma^\infty$ inverts acyclic maps, the result thus follows again from Lemma \ref{lemm:Hilb-rational}.
\end{example}

\begin{example} \label{ex:KGL}
The argument of Example \ref{ex:HZ} is modeled on Voevodsky's determination of $s_0(\KGL)$, which we can restate in our language as follows:
Using that $\Omega^\infty \KGL = \Z \times \Gr$, where $\Gr$ is the infinite Grassmannian variety, arguing as above one is reduced to showing that $\Gr$ is rational.
This is well-known.
\end{example}

\begin{example}
The converse of Proposition \ref{prop:detect-eff} is false.\todo{Slicker example?}
Let $E = H\Z/2 \wedge \Gm$.
We claim that $\Sigma^\infty \Omega^\infty E \not\in \SH(k)^\eff(1)$.
For this it suffices to construct a non-zero map $\Sigma^\infty \Omega^\infty E \to \Sigma H\Z/2$, or equivalently a non-zero map $\Omega^\infty H\Z/2 \wedge \Gm \to \Omega^\infty \Sigma H\Z/2$.
Any grouplike monoid $M$ is equivalent as a pointed space to $\pi_0(M) \times M_{\ge 1}$\NB{ref?}; applying this construction sectionwise and projecting to the $M_{\ge 1}$ part, we obtain the desired non-zero map \[ \Omega^\infty H\Z/2 \wedge \Gm \to \Omega^\infty (H\Z/2 \wedge \Gm)_{\ge 1} \wequi K(\Z/2, 1) \wequi \Omega^\infty \Sigma H\Z/2. \]
\end{example}

\bibliographystyle{alpha}
\bibliography{bibliography}

\end{document}